\newcommand{\Xc}{\mathcal{X}}
\newcommand{\Mc}{\mathcal{M}}
\newcommand{\Oc}{\mathcal{O}}
\newcommand{\eh}{\hat{E}}
\newcommand{\reals}{\mathbb{R}}
\newcommand{\sx}{{x^*}}
\newcommand{\sml}[1]{{\small #1}}
\DeclareMathOperator{\prox}{prox}
\newcommand{\ind}[2]{\delta({#2} | {#1})}
\newcommand{\proj}{\Pi}
\newcommand{\pp}{\mathscr{P}}
\newcommand{\plg}{\pp_\eta^g}
\newcommand{\half}{\tfrac{1}{2}}
\newcommand{\set}[1]{\left\{ {#1}\right\}}
\newcommand{\ip}[2]{\langle {#1},\, {#2} \rangle}
\newcommand{\nlsum}{\sum\nolimits}
\newcommand{\nlmin}{\min\nolimits}
\newcommand{\pnorm}[2]{\| {#1} \|_{#2}}
\newcommand{\norm}[1]{\|{#1}\|}
\newcommand{\algo}{\textsc{NIPS}\xspace}
\newcommand{\bignorm}[2]{\left\| {#1} \right\|_{#2}}
\newcommand{\norml}[1]{\pnorm{#1}{1}}
\newcommand{\frob}[1]{\|{#1}\|_{\text{F}}}
\newcommand{\fromto}[3]{\sml{$#1 \le #2 \le #3$}}
\DeclareMathOperator*{\argmin}{argmin}
\DeclareMathOperator*{\id}{Id}
\newtheorem{theorem}{Theorem}
\newtheorem{lemma}[theorem]{Lemma}
\newtheorem{corr}[theorem]{Corollary}
\theoremstyle{definition}
\newtheorem{defn}[theorem]{Definition}
\begin{document}

\title{Nonconvex proximal-splitting: batch and incremental algorithms}
\author{Suvrit Sra\thanks{Max Planck Institute for Intelligent Systems, T\"ubingen, Germany. Email: {\it suvrit@tuebingen.mpg.de}}}

\maketitle

\begin{abstract}
  We study a class of large-scale, nonsmooth, and nonconvex optimization problems. In particular, we focus on nonconvex problems with \emph{composite} objectives. This class  includes the extensively studied convex, composite objective problems as a subclass. To solve composite nonconvex problems we introduce a powerful new framework based on asymptotically \emph{nonvanishing} errors, avoiding the common stronger assumption of  vanishing errors. Within our new framework we derive both batch and incremental proximal splitting algorithms. To our knowledge, our work is first to develop and analyze incremental \emph{nonconvex} proximal-splitting algorithms, even if we were to disregard the ability to handle nonvanishing errors. We illustrate one instance of our general framework by showing an application to large-scale nonsmooth matrix factorization.
\end{abstract}

\section{Introduction}
This paper focuses on nonconvex \emph{composite objective} problems having the form
\begin{equation}
  \label{eq.1}
  \text{minimize}\quad \Phi(x) := f(x) + h(x)\qquad x \in \Xc,
\end{equation}
where $f : \reals^n \to \reals$ is continuously differentiable and possibly nonconvex, $h : \reals^m \to \reals \cup \set{\infty}$ is lower semi-continuous (lsc) and convex (possibly nonsmooth), and $\Xc$ is a compact convex set. We also make the common assumption that $f$ has a Lipschitz continuous gradient within $\Xc$, written as $f \in C_L^1(\Xc)$; that is, there is a constant $L> 0$ such that
\begin{equation}
  \label{eq.lipf}
  \norm{\nabla f(x) - \nabla f(y)} \le L\norm{x-y}\qquad\text{for all}\quad x, y \in \Xc.
\end{equation}

Problem~\eqref{eq.1} is a natural but far-reaching generalization of \emph{composite objective} convex problems, which continue to enjoy tremendous importance in machine learning; see e.g.,~\citep{duchi09a,bach11,lxiao09,beck09}. Although, convex formulations are extremely useful, for many difficult problems a nonconvex formulation is more natural. Familiar examples include matrix factorization~\citep{mairal10a,lee00}, blind deconvolution~\citep{kundur96}, dictionary learning~\citep{delgado03,mairal10a}, and neural networks~\citep{bertsekas99,haykin94}. 

The primary contribution of this paper is theoretical and algorithmic. Specifically, we present a new framework: \textsc{N}onconvex \textsc{I}nexact \textsc{P}roximal \textsc{S}plitting (\algo). Our framework  solves~\eqref{eq.1} by ``splitting'' the task into smooth (gradient) and nonsmooth (proximal) parts. Beyond splitting, the most notable feature of \algo is that it allows \emph{computational errors}. This capability proves critical to obtaining a scalable, incremental-gradient variant of \algo, which, to our knowledge, is the first incremental nonconvex proximal-splitting method in the literature.

A further distinction of \algo lies in how it models computational errors. Notably, \algo \emph{does not} require the errors to vanish in the limit, a realistic assumption as often one has limited to no control over computational errors inherent to a complex system. In accord with the errors, \algo also \emph{does not} require stepsizes (learning rates) to shrink to zero. In contrast, most incremental-gradient methods~\citep{bertsekas10} and stochastic gradient algorithms~\citep{gaivoronski94} \emph{do assume} that the computational errors and stepsizes decay to zero. We do not make these simplifying assumptions, which complicates the convergence analysis but results perhaps in a more satisfying description. 

Our analysis builds on the remarkable work of~\citet{solodov97}, who studied the simpler setting of \emph{differentiable} nonconvex problems (corresponding to the choice $h \equiv 0$ in~\eqref{eq.1}). \algo is strictly more general: unlike~\citep{solodov97} it solves a \emph{non-differentiable} problem by allowing a nonsmooth regularizer $h\not\equiv 0$, which it tackles by invoking the fruitful idea of \emph{proximal-splitting}~\citep{combettes10}. 


Proximal splitting has proved to be exceptionally effective and practical~\citep{combettes10,beck09,bach11,duchi09a}. It retains the simplicity of gradient-projection while handling the nonsmooth regularizer $h$ via its proximity operator. This style is especially attractive because for several important choices of $h$, efficient implementations of the associated proximity operators exist~\citep{bach11,liu09a,mairal10a}. For convex problems, an alternative to proximal splitting is the subgradient method; similarly, for nonconvex problems one could use a generalized subgradient method~\citep{clarke83,ermoliev98}. However, as in the convex case, the use of subgradients has drawbacks: it fails to exploit the composite structure, and even when using sparsity promoting regularizers it does not generate intermediate sparse iterates~\citep{duchi09a}.

Among batch nonconvex splitting methods, an early paper is~\citep{fukushima81}. More recently, in his pioneering paper on convex composite minimization, \citet{nesterov07} also briefly discussed nonconvex problems. Both~\citep{fukushima81} and~\citep{nesterov07}, however, enforced monotonic descent in the objective value to ensure convergence. Very recently,~\citet{attbosv11} have introduced a generic method for nonconvex nonsmooth problems based on Kurdyka-\L{}ojasiewicz theory, but their entire framework too hinges on descent.

This insistence on descent makes these methods unsuitable for incremental, stochastic, or online variants, all of which usually lead to a nonmonotone sequence of objective values. Among nonmonotonic methods that apply to~\eqref{eq.1}, we are aware of the generalized gradient-type algorithms of~\citep{solodov98} and the stochastic generalized gradient methods of~\citep{ermoliev98}. Both methods, however, are analogous to the usual subgradient-based algorithms, and fail to exploit the composite objective structure.

But despite its desirability and potential benefits, proximal-splitting for exploiting composite objectives does not apply out-of-the-box to~\eqref{eq.1}: nonconvexity raises significant obstructions, especially because nonmonotonic descent in the objective function values is allowed. Overcoming these obstructions to achieve a scalable non-descent based method is what makes the \algo framework novel. 

\section{The \algo Framework}
To simplify presentation, we replace $h$ by the penalty function
\begin{equation}
  \label{eq.4}
  g(x) := h(x) + \ind{\Xc}{x},
\end{equation}
where $\ind{\Xc}{\cdot}$ is the \emph{indicator function} for $\Xc$:  $\ind{\Xc}{x}=0$ for $x \in \Xc$, and $\ind{\Xc}{x}=\infty$ for $x \not\in \Xc$. With this notation, we may rewrite~\eqref{eq.1} as the \emph{unconstrained} problem:
\begin{equation}
  \label{eq.8}
  \nlmin_{x \in \reals^n}\quad \Phi(x) := f(x) + g(x),
\end{equation}
and this particular formulation is our primary focus. We solve~\eqref{eq.8} via a proximal-splitting approach, so let us begin by defining our most important component.
\begin{defn}[Proximity operator]
  \label{def.prox}
  Let $g : \reals^n \to \reals$ be an lsc, convex function. The \emph{proximity operator} for $g$, indexed by $\eta > 0$, is the nonlinear map~\citep[see e.g.,][Def.~1.22]{rockafellar98}:
  \begin{equation}
    \label{eq.7}
    \pp_\eta^g\ :\quad y\mapsto\ \argmin_{x \in \reals^n}\ \bigl(g(x) + \tfrac{1}{2\eta}\norm{x-y}^2\bigr).
  \end{equation}
\end{defn}

The operator~\eqref{eq.7} was introduced by~\citet{moreau62} (1962) as a generalization of orthogonal projections. It is also key to Rockafellar's classic \emph{proximal point algorithm}~\citep{rock76}, and it arises in a host of \emph{proximal-splitting} methods~\citep{combettes10,duchi09a,bach11,beck09}, most notably in \emph{forward-backward splitting} (FBS)~\citep{combettes10}.


FBS is particularly attractive because of its simplicity and algorithmic structure. It minimizes convex composite objective functions by alternating between  ``forward'' (gradient) steps and ``backward'' (proximal) steps. Formally, suppose $f$ in~\eqref{eq.8} is convex; for such $f$, FBS performs the iteration
\begin{equation}
  \label{eq.59}
  x^{k+1} = \pp_{\eta_k}^g(x^k - \eta_k\nabla f(x^k)),\quad k=0,1,\ldots,
\end{equation}
where $\set{\eta_k}$ is a suitable sequence of stepsizes. The usual convergence analysis of FBS is intimately tied to convexity of $f$. Therefore, to tackle nonconvex $f$ we must take a different approach. As previously mentioned, such approaches were considered by~\citet{fukushima81} and~\citet{nesterov07}, but both proved convergence by enforcing monotonic descent. 

This insistence on descent severely impedes scalability. Thus, the key challenge is: \emph{how to retain the algorithmic simplicity of FBS and allow nonconvex losses, without sacrificing scalability?}  

We address this challenge by introducing the following \emph{inexact} proximal-splitting iteration:
\begin{equation}
  \label{eq.main}
  x^{k+1} = \pp_{\eta_k}^g(x^k - \eta_k\nabla f(x^k) + \eta_ke(x^k)),\quad k=0,1,\ldots,
\end{equation}
where $e(x^k)$ models the \emph{computational errors} in computing the gradient $\nabla f(x^k)$. We also assume that for $\eta > 0$ smaller than some stepsize $\bar \eta$, the computational error is uniformly \emph{bounded}, that is,
\begin{equation}
  \label{eq.60}
  \eta\norm{e(x)} \le \bar{\epsilon},\quad\text{for some fixed error level }\ \bar{\epsilon} \ge 0,\quad\text{and } \forall x \in \Xc.
\end{equation}
Condition~\eqref{eq.60} is weaker than the typical vanishing error requirements
\begin{equation*}
  \nlsum_k \eta\norm{e(x^k)} < \infty,\qquad \lim_{k\to \infty} \eta\norm{e(x^k)} = 0,
\end{equation*}
which are stipulated by most analyses of methods with gradient errors~\citep{bertsekas99,bertsekas10}. Obviously, since errors are nonvanishing,  exact stationarity cannot be guaranteed. We will, however, show that the iterates produced by~\eqref{eq.main} do progress towards reasonable \emph{inexact stationary points}. We note in passing that even if we assume the simpler case of vanishing errors, \algo is still the first nonconvex proximal-splitting framework that does not insist on monotonicity, which which complicates convergence analysis but ultimately proves crucial to scalability. 


\begin{algorithm}[htbp]\small
  \begin{algorithmic}
    \Require{Operator $\pp_\eta^g$, and a sequence $\set{\eta_k}$ satisfying
      \begin{equation}
        \label{eq.38}
        c \le \liminf\nolimits_k \eta_k,\quad \limsup\nolimits_k \eta_k \le \min\set{1, 2/L-c},\quad0 < c < 1/L.
      \end{equation}}
    \Ensure{Approximate solution to~\eqref{eq.main}}
    \hrule
    \State $k \gets 0$; Select arbitrary $x^0 \in \Xc$
    \While{$\neg$ converged}
    \State Compute approximate gradient $\widetilde{\nabla}f(x^k) := \nabla f(x^k) - e(x^k)$
    \State Update: $x^{k+1} = \pp_{\eta_k}^g(x^k - \eta_k\widetilde{\nabla}f(x^k))$
    \State $k \gets k + 1$
    \EndWhile
  \end{algorithmic}
  \caption{Inexact Nonconvex Proximal Splitting (\algo)}\label{alg.nocops}
\end{algorithm}

\subsection{Convergence analysis}
\label{sec.convergence}

We begin by characterizing inexact stationarity. A point $x^*$ is a stationary point for~(\ref{eq.8}) if and only if it satisfies the \emph{inclusion}
\begin{equation}
  \label{eq.30}
  0 \in \partial_C\Phi(x^*) := \nabla f(x^*) + \partial g(x^*),
\end{equation}
where $\partial_C\phi$ denotes the Clarke subdifferential~\citep{clarke83}. A brief exercise shows that this inclusion may be equivalently recast as the fixed-point equation (which augurs the idea of proximal-splitting)
\begin{equation}
  \label{eq.9}
  x^* = \pp_\eta^g(x^* - \eta\nabla f(x^*)),\quad\text{for } \eta > 0.
\end{equation}
This equation helps us define a measure of inexact stationarity: the \emph{proximal residual}
\begin{equation}
  \label{eq.5}
  \rho(x) := x - \pp_1^g(x - \nabla f(x)).
\end{equation}  
Note that for an exact stationary point $x^*$ the residual norm $\norm{\rho(x^*)}=0$. Thus, we call a point $x$ to be $\epsilon$-\emph{stationary} if for a prescribed error level $\epsilon(x)$, the corresponding residual norm satisfies
\begin{equation}
  \label{eq.10}
  \norm{\rho(x)} \le \epsilon(x).
\end{equation}
Assuming the error-level $\epsilon(x)$ (say if $\bar{\epsilon}=\lim\sup_k \epsilon(x^k)$) satisfies the bound~\eqref{eq.60}, we prove below that the iterates $\set{x^k}$ generated by~(\ref{eq.main}) satisfy an approximate stationarity condition of the form~\eqref{eq.10}, by allowing the stepsize $\eta$ to become correspondingly small (but strictly bounded away from zero). 

We start by recalling two basic facts, stated without proof as they are standard knowledge.
\begin{lemma}[Lipschitz-descent~\protect{\citep[see e.g.,][Lemma~2.1.3]{nesterov04}}]
  \label{lem.lip}
  Let $f \in C_L^1(\Xc)$. Then,
  \begin{equation}
    \label{eq.11}
    |f(x) - f(y) - \ip{\nabla f(y)}{x-y}| \le \tfrac{L}{2}\norm{x-y}^2,\quad \forall\ x, y \in \Xc.
  \end{equation}
\end{lemma}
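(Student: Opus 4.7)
The plan is to prove this standard descent lemma by the fundamental theorem of calculus applied along the line segment between $x$ and $y$, using the Lipschitz property~\eqref{eq.lipf} to bound the resulting integrand. The key observation enabling the integration is that $\Xc$ is convex, so the segment $\{y + t(x-y) : t \in [0,1]\}$ lies entirely in $\Xc$, ensuring $\nabla f$ is defined and Lipschitz along the segment.

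First I would introduce the auxiliary function $\phi : [0,1] \to \reals$ defined by $\phi(t) := f(y + t(x-y))$. Since $f \in C_L^1(\Xc)$ and the segment lies in $\Xc$, $\phi$ is continuously differentiable with $\phi'(t) = \ip{\nabla f(y + t(x-y))}{x-y}$. Applying the fundamental theorem of calculus gives
\begin{equation*}
  f(x) - f(y) = \phi(1) - \phi(0) = \int_0^1 \ip{\nabla f(y + t(x-y))}{x-y}\, dt.
\end{equation*}
Next I would subtract $\ip{\nabla f(y)}{x-y} = \int_0^1 \ip{\nabla f(y)}{x-y}\, dt$ from both sides to obtain
\begin{equation*}
  f(x) - f(y) - \ip{\nabla f(y)}{x-y} = \int_0^1 \ip{\nabla f(y + t(x-y)) - \nabla f(y)}{x-y}\, dt.
\end{equation*}

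Finally I would take absolute values, pull them inside the integral, apply Cauchy--Schwarz, and then invoke the Lipschitz bound~\eqref{eq.lipf} applied to the pair $(y + t(x-y), y)$, which yields $\|\nabla f(y + t(x-y)) - \nabla f(y)\| \le Lt\norm{x-y}$. This gives
\begin{equation*}
  |f(x) - f(y) - \ip{\nabla f(y)}{x-y}| \le \int_0^1 Lt\norm{x-y}^2\, dt = \tfrac{L}{2}\norm{x-y}^2,
\end{equation*}
as claimed. There is no real obstacle here; the only subtlety worth flagging is that convexity of $\Xc$ is essential both for applying the fundamental theorem of calculus along the segment and for invoking~\eqref{eq.lipf} at the intermediate points $y+t(x-y)$.
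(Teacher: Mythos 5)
Your proof is correct and is exactly the standard argument (fundamental theorem of calculus along the segment, Cauchy--Schwarz, then the Lipschitz bound at intermediate points) that the paper itself omits, stating the lemma without proof and citing Nesterov's Lemma~2.1.3, whose proof proceeds in the same way. Your remark that convexity of $\Xc$ is what keeps the segment inside the region where \eqref{eq.lipf} holds is the right subtlety to flag.
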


\begin{lemma}[Nonexpansivity~\protect{\citep[see e.g.,][Lemma~2.4]{combettes05}}]
  \label{lem.nex}
  The operator $\pp_\eta^g$ is \emph{nonexpansive}, that is,
  \begin{align}
    \label{ne}
    \norm{\plg(x) - \plg(y)} \le \norm{x-y},\quad\forall\ x, y \in \reals^n.
  \end{align}
\end{lemma}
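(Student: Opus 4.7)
The plan is to derive nonexpansivity from the first-order optimality conditions of the strongly convex minimization problem that defines $\pp_\eta^g$, combined with the monotonicity of the subdifferential of the convex function $g$. This is the standard route, and I would organize the argument in three short steps.

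First, I would characterize the proximity operator via a subgradient inclusion. Since $x \mapsto g(x) + \tfrac{1}{2\eta}\norm{x-y}^2$ is strongly convex and lsc, its unique minimizer $u = \plg(y)$ is characterized by the Fermat condition
\begin{equation*}
0 \in \partial g(u) + \tfrac{1}{\eta}(u - y), \qquad\text{equivalently}\qquad \tfrac{1}{\eta}(y - u) \in \partial g(u).
\end{equation*}
Applying this to two arbitrary points $x,y \in \reals^n$ with $u := \plg(x)$ and $v := \plg(y)$ yields the two inclusions $\tfrac{1}{\eta}(x - u) \in \partial g(u)$ and $\tfrac{1}{\eta}(y - v) \in \partial g(v)$.

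Second, I would invoke monotonicity of $\partial g$, which holds because $g$ is convex: for any selections $p \in \partial g(u)$ and $q \in \partial g(v)$ one has $\ip{p - q}{u - v} \ge 0$. Using the two selections from the previous step and multiplying through by $\eta > 0$, this gives
\begin{equation*}
\ip{(x - u) - (y - v)}{u - v} \ge 0,
\qquad\text{i.e.,}\qquad
\ip{x - y}{u - v} \ge \norm{u - v}^2.
\end{equation*}
This is in fact the stronger \emph{firm} nonexpansivity inequality, which is a well-known refinement of what the lemma claims.

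Finally, I would apply Cauchy--Schwarz to the left-hand side, obtaining $\norm{x - y}\cdot\norm{u - v} \ge \norm{u - v}^2$, and conclude $\norm{\plg(x) - \plg(y)} \le \norm{x - y}$, which is exactly~\eqref{ne}. There is no real obstacle here: the only subtlety worth flagging is that the argument uses convexity of $g$ in an essential way (through monotonicity of $\partial g$), so the result would not transfer to the nonconvex $f$ piece of $\Phi$; nonexpansivity of $\plg$ is precisely the property of the convex part that \algo will exploit in the subsequent analysis.
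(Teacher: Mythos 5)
Your proof is correct and is exactly the standard argument behind the cited result: the paper itself states Lemma~\ref{lem.nex} without proof, deferring to~\citep{combettes05}, where the same route (Fermat optimality condition, monotonicity of $\partial g$ giving firm nonexpansivity, then Cauchy--Schwarz) is used. Nothing is missing, and your observation that the argument even yields the stronger firm-nonexpansivity inequality is accurate.
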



Next we prove a crucial monotonicity property that actually subsumes similar results for projection operators derived by~\citet[Lem.~1]{gafni84}, and may therefore be of independent interest.
\begin{lemma}[Prox-Monotonicity]
  \label{lem.mono}
  Let $y, z \in \reals^n$, and $\eta > 0$. Define the functions
  \begin{equation}
    \label{eq.19}
    p_g(\eta)\quad:=\quad \tfrac{1}{\eta}\norm{\pp_\eta^g(y - \eta z) -y},\quad\text{and}\quad
    q_g(\eta)\quad:=\quad\norm{\pp_\eta^g(y - \eta z) - y}.
  \end{equation}
  Then, $p_g(\eta)$ is a decreasing function of $\eta$, and $q_g(\eta)$ an increasing function of $\eta$.
\end{lemma}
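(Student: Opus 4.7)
The plan is to exploit the subgradient characterization of the prox operator, apply monotonicity of the convex subdifferential of $g$, and then extract both monotonicities simultaneously from a single inequality.

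First I would fix $0 < \eta_1 < \eta_2$ and write $u_i := \pp_{\eta_i}^g(y - \eta_i z)$ with displacements $d_i := y - u_i$ for $i=1,2$. The optimality condition for the minimization in Definition~\ref{def.prox} yields $(y - u_i)/\eta_i - z \in \partial g(u_i)$, i.e.\ $d_i/\eta_i - z \in \partial g(u_i)$. Monotonicity of $\partial g$ (from convexity of $g$) then gives
\begin{equation*}
  \ip{d_1/\eta_1 - d_2/\eta_2}{u_1 - u_2} \ge 0.
\end{equation*}
Since $u_1 - u_2 = d_2 - d_1$, expanding this inner product and regrouping terms produces
\begin{equation*}
  \bigl(\tfrac{1}{\eta_1} + \tfrac{1}{\eta_2}\bigr) \ip{d_1}{d_2} \ge \tfrac{1}{\eta_1}\norm{d_1}^2 + \tfrac{1}{\eta_2}\norm{d_2}^2.
\end{equation*}

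Next I would apply Cauchy--Schwarz on the left-hand side and rewrite everything in terms of the scalars $a := \norm{d_1}$, $b := \norm{d_2}$, $s_i := 1/\eta_i$ (so $s_1 > s_2 > 0$). After rearrangement, this collapses to the clean inequality
\begin{equation*}
  (b - a)\,(s_1 a - s_2 b) \ge 0.
\end{equation*}
This is the key algebraic identity from which both claims fall out simultaneously. Indeed, if $b < a$ then $b - a < 0$ forces $s_1 a \le s_2 b$, contradicting $s_1 > s_2$ and $a > b \ge 0$; hence $b \ge a$, which is exactly $q_g(\eta_1) \le q_g(\eta_2)$. The other factor then yields $s_1 a \ge s_2 b$, i.e.\ $p_g(\eta_1) \ge p_g(\eta_2)$.

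The only step that takes a moment of care is spotting the factorization $(b-a)(s_1 a - s_2 b) \ge 0$ and arguing the case split correctly; everything else is standard manipulation from the subdifferential of the prox. No appeal to differentiability or to nonexpansivity (Lemma~\ref{lem.nex}) is needed, which is why the result subsumes the projection-operator version of Gafni--Bertsekas.
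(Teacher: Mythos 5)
Your proof is correct, and it takes a genuinely different route from the paper. The paper argues through the deflected Moreau envelope $E_g(\eta)=\inf_x\bigl(\ip{z}{x-y}+\tfrac{1}{2\eta}\norm{x-y}^2+g(x)\bigr)$: it identifies $E_g'(\eta)=-\half p_g(\eta)^2$, obtains monotonicity of $p_g$ from convexity of $E_g$ in $\eta$, and then repeats the argument in the variable $\gamma=1/\eta$, where the envelope is a pointwise infimum of functions linear in $\gamma$ and hence concave, to get monotonicity of $q_g$. You instead work directly from the optimality condition $d_i/\eta_i-z\in\partial g(u_i)$ and monotonicity of $\partial g$, and after Cauchy--Schwarz you extract both monotonicity claims at once from the single factorized inequality $(b-a)(s_1a-s_2b)\ge 0$. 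Your route is more elementary: it needs no differentiability of the envelope and sidesteps the convexity-in-$\eta$ claim (which in the paper's argument really rests on joint convexity of $(x,\eta)\mapsto\tfrac{1}{2\eta}\norm{x-y}^2$ and preservation of convexity under partial minimization), while still subsuming the Gafni--Bertsekas projection case, as you observe. One small point to tighten: when $b=a$ the first factor vanishes, so ``the other factor'' is not forced by the inequality; but then $s_1a\ge s_2b$ holds trivially since $s_1>s_2$ and $a=b\ge 0$, so the conclusion stands. Like the paper's proof, yours gives monotonicity in the non-strict sense, which is all that is used later (e.g., in Lemma~\ref{lem.xubounds}).
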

\begin{proof}
  Our proof exploits properties of Moreau-envelopes~\citep[pp.~19,52]{rockafellar98}, and we present it in the language of proximity operators. Consider the ``deflected''  proximal objective
  \begin{equation}
    \label{eq.18}
    m_g(x, \eta; y, z) := \ip{z}{x - y} + \tfrac{1}{2\eta}\norm{x-y}^2 + g(x),\quad\text{for some}\ y, z \in \Xc.
  \end{equation}
  Associate to objective $m_g$ the \emph{deflected Moreau-envelope}
  \begin{equation}
    \label{env}
    E_g(\eta) := \inf_{x \in \Xc}\ m_g(x, \eta; y, z),
  \end{equation}
  whose infimum is attained at the unique point $\pp_{\eta}^g(y-\eta z)$. Thus, $E_g(\eta)$ is differentiable, and its derivative is given by $E_g'(\eta) = -\tfrac{1}{2\eta^2}\norm{\pp_{\eta}^g(y-\eta z)-y}^2 = -\half p(\eta)^2$.
  Since $E_g$ is convex in $\eta$, $E_g'$ is increasing (\citep[Thm.~2.26]{rockafellar98}), or equivalently $p(\eta)$ is decreasing.  Similarly, define $\eh_g(\gamma) := E_g(1/\gamma)$; this function is concave in $\gamma$ as it is a pointwise infimum (indexed by $x$) of functions linear in $\gamma$~\citep[see e.g.,~\S3.2.3 in][]{boyd04}. Thus, its derivative $\eh_g'(\gamma) = \half\norm{\pp_{1/\gamma}^g(x- \gamma^{-1}y)-x}^2 = q_g(1/\gamma)$,  is a decreasing function of $\gamma$. Set $\eta=1/\gamma$ to conclude the argument about $q_g(\eta)$.
\end{proof}

We now proceed to bound the difference between objective function values from  iteration $k$ to $k+1$, by developing a bound of the form
\begin{equation}
  \label{eq.6}
  \Phi(x^k) - \Phi(x^{k+1}) \ge h(x^k).
\end{equation}
Obviously, since we do \emph{not} enforce strict descent, $h(x^k)$ may be negative too. However, we show that for sufficiently large $k$ the algorithm makes enough progress to ensure convergence.


\begin{lemma}
  \label{lem.descent}
  Let $x^{k+1}$, $x^k$, $\eta_k$, and $\Xc$ be as in~(\ref{eq.main}), and that $\eta_k\norm{e(x^k)} \le \epsilon(x^k)$ holds. Then,
  \begin{equation}
    \label{eq.31}
    \Phi(x^k) - \Phi(x^{k+1})  \quad\ge\quad\tfrac{2-L\eta_k}{2\eta_k}\norm{x^{k+1}-x^k}^2 -
    \tfrac{1}{\eta_k}\epsilon(x^k)\norm{x^{k+1}-x^k}.
  \end{equation}
\end{lemma}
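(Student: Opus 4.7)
The plan is to combine a standard proximal-gradient descent argument with an inexact-gradient correction term. The key ingredients I expect to need are (i) the Lipschitz descent inequality from Lemma~\ref{lem.lip}, (ii) the optimality condition for the proximal subproblem defining $x^{k+1}$, and (iii) the convexity of $g$ to turn that optimality condition into a comparison between $g(x^{k+1})$ and $g(x^k)$. I would then bound the residual ``error'' term via Cauchy--Schwarz and use the hypothesis $\eta_k\norm{e(x^k)}\le\epsilon(x^k)$ to arrive at the stated bound.

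Concretely, first I would apply Lemma~\ref{lem.lip} at the pair $(x^{k+1},x^k)$ to obtain
\[
f(x^{k+1}) \le f(x^k) + \ip{\nabla f(x^k)}{x^{k+1}-x^k} + \tfrac{L}{2}\norm{x^{k+1}-x^k}^2.
\]
Second, since $x^{k+1} = \pp_{\eta_k}^g(x^k - \eta_k\nabla f(x^k) + \eta_k e(x^k))$ is the unique minimizer of the strongly convex objective in Definition~\ref{def.prox}, its first-order optimality condition reads
\[
\tfrac{1}{\eta_k}(x^k - x^{k+1}) - \nabla f(x^k) + e(x^k) \in \partial g(x^{k+1}).
\]
Plugging this subgradient into the convexity inequality $g(x^k) \ge g(x^{k+1}) + \ip{\xi}{x^k-x^{k+1}}$ and expanding yields
\[
g(x^k) - g(x^{k+1}) \ge \tfrac{1}{\eta_k}\norm{x^{k+1}-x^k}^2 + \ip{\nabla f(x^k)}{x^{k+1}-x^k} - \ip{e(x^k)}{x^{k+1}-x^k}.
\]

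Third, I would add the two displays above (after rearranging the Lipschitz inequality to a bound on $f(x^k)-f(x^{k+1})$). The $\ip{\nabla f(x^k)}{x^{k+1}-x^k}$ terms cancel cleanly, leaving
\[
\Phi(x^k) - \Phi(x^{k+1}) \ge \tfrac{2-L\eta_k}{2\eta_k}\norm{x^{k+1}-x^k}^2 - \ip{e(x^k)}{x^{k+1}-x^k}.
\]
Finally, Cauchy--Schwarz and the hypothesis $\norm{e(x^k)} \le \epsilon(x^k)/\eta_k$ give $\ip{e(x^k)}{x^{k+1}-x^k} \le (\epsilon(x^k)/\eta_k)\norm{x^{k+1}-x^k}$, which matches the target bound \eqref{eq.31} exactly.

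There is no real obstacle here; the only subtlety is being careful with signs when extracting the subgradient from the proximal optimality condition (in particular, accounting correctly for the sign of $e(x^k)$, which enters the iterate as $+\eta_k e(x^k)$ rather than as a subtracted error), so that the final error term carries a minus sign. Lemma~\ref{lem.mono} and Lemma~\ref{lem.nex} are not needed at this step; they will presumably be invoked later when translating this one-step inequality into a statement about convergence of the proximal residual $\rho(x^k)$.
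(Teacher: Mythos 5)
Your proposal is correct and follows essentially the same route as the paper: Lipschitz descent (Lemma~\ref{lem.lip}), the first-order optimality condition of the proximal step at $x^{k+1}$ (which the paper phrases as a directional-derivative condition on the deflected Moreau envelope, with an explicit $s^{k+1}\in\partial g(x^{k+1})$), convexity of $g$, and Cauchy--Schwarz combined with $\eta_k\norm{e(x^k)}\le\epsilon(x^k)$. Your handling of the sign of the error term and the resulting coefficient $\tfrac{2-L\eta_k}{2\eta_k}$ match the paper's computation exactly.
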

\begin{proof}
  For the deflected Moreau envelope~(\ref{eq.18}), consider the directional derivative $\text{d}m_g$ with respect to $x$ in the direction $w$; at $x=x^{k+1}$, this derivative satisfies the optimality condition
  \begin{equation}
    \label{eq.22}
    \text{d}m_g(x^{k+1}, \eta; y, z)(w) = \ip{z + \eta^{-1}(x^{k+1}-y) + s^{k+1}}{w} \ge 0,\quad s^{k+1} \in \partial g(x^{k+1}).
  \end{equation}
  Set $z=\nabla f(x^k) - e(x^k)$, $y=x^k$, and $w=x^k-x^{k+1}$ in~\eqref{eq.22}, and rearrange to obtain
  \begin{equation}
    \label{eq.35}
    \ip{\nabla f(x^k) - e(x^k)}{x^{k+1}-x^k}\le\ip{\eta^{-1}(x^{k+1}-x^k) + s^{k+1}}{x^k-x^{k+1}}.
  \end{equation}
  From Lemma~\ref{lem.lip} it follows that
  \begin{equation}
    \label{eq.34}
    \Phi(x^{k+1}) \le f(x^k) + \ip{\nabla f(x^k)}{x^{k+1}-x^k} + \tfrac{L}{2}\norm{x^{k+1}-x^k}^2 + g(x^{k+1}),
  \end{equation}
  whereby upon adding and subtracting $e(x^k)$, and then using~\eqref{eq.35} we further obtain
  \begin{align*}
    &f(x^k) + \ip{\nabla f(x^k) - e(x^k)}{x^{k+1}-x^k} + \tfrac{L}{2}\norm{x^{k+1}-x^k}^2 + g(x^{k+1}) + \ip{e(x^k)}{x^{k+1}-x^k}\\
    &\le f(x^k) + g(x^{k+1}) + \ip{s^{k+1}}{x^k-x^{k+1}} + \bigl(\tfrac{L}{2}-\tfrac{1}{\eta_k}\bigr)\norm{x^{k+1}-x^k}^2 + \ip{e(x^k)}{x^{k+1}-x^k}\\
    &\le f(x^k) + g(x^k) - \tfrac{2-L\eta_k}{2\eta_k}\norm{x^{k+1}-x^k}^2 + \ip{e(x^k)}{x^{k+1}-x^k}\\
    &\le \Phi(x^k) - \tfrac{2-L\eta_k}{2\eta_k}\norm{x^{k+1}-x^k}^2 + \norm{e(x^k)}\norm{x^{k+1}-x^k}\\
    &\le \Phi(x^k) - \tfrac{2-L\eta_k}{2\eta_k}\norm{x^{k+1}-x^k}^2 + \tfrac{1}{\eta_k}\epsilon(x^k)\norm{x^{k+1}-x^k}.
  \end{align*}
  The second inequality above follows from convexity of $g$, the third one from Cauchy-Schwarz, and the last one by assumption on $\epsilon(x^k)$. Now flip signs and apply~\eqref{eq.34} to conclude the bound~\eqref{eq.31}.
\end{proof}

Next we further bound~\eqref{eq.31} by deriving two-sided bounds on $\norm{x^{k+1}-x^k}$. 
\begin{lemma}
  \label{lem.xubounds}
  Let $x^{k+1}$, $x^k$, and $\epsilon(x^k)$ be as before; also let $c$ and $\eta_k$ satisfy~\eqref{eq.38}. Then,
  \begin{equation}
    \label{eq.xubounds}
    c\norm{\rho(x^k)} - \epsilon(x^k) \le \norm{x^{k+1}-x^k} \le \norm{\rho(x^k)} + \epsilon(x^k).
  \end{equation}
\end{lemma}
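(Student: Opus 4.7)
The plan is to sandwich $\|x^{k+1}-x^k\|$ by comparing the true, error-corrupted iterate $x^{k+1}$ against an auxiliary \emph{exact} proximal step, and then to use Lemma~\ref{lem.mono} to translate between the stepsize $\eta_k$ used in the iteration and the stepsize $\eta=1$ that defines the residual $\rho(x^k)$.

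First I would introduce the auxiliary point
$$a^k := \pp_{\eta_k}^g\bigl(x^k - \eta_k \nabla f(x^k)\bigr),$$
i.e., the iterate we would have obtained with zero error. Since $x^{k+1} = \pp_{\eta_k}^g(x^k - \eta_k\nabla f(x^k) + \eta_k e(x^k))$, nonexpansivity (Lemma~\ref{lem.nex}) immediately yields
$$\norm{x^{k+1} - a^k} \le \eta_k\norm{e(x^k)} \le \epsilon(x^k).$$

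Next I would connect $\norm{a^k - x^k}$ to $\norm{\rho(x^k)}$ by invoking Lemma~\ref{lem.mono} with the data $y=x^k$, $z=\nabla f(x^k)$. In this instantiation, $q_g(\eta_k) = \norm{a^k-x^k}$ and $q_g(1)=\norm{\rho(x^k)}$, while $p_g(\eta_k) = \norm{a^k-x^k}/\eta_k$ and $p_g(1)=\norm{\rho(x^k)}$. Under condition~\eqref{eq.38} (so that $c \le \eta_k \le 1$ for the relevant $k$), monotonicity gives both
$$\norm{a^k-x^k} = q_g(\eta_k) \le q_g(1) = \norm{\rho(x^k)}$$
(since $q_g$ is increasing and $\eta_k\le 1$), and
$$\norm{a^k-x^k} = \eta_k\, p_g(\eta_k) \ge \eta_k\, p_g(1) \ge c\,\norm{\rho(x^k)}$$
(since $p_g$ is decreasing and $\eta_k \ge c$).

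Finally I would combine the two pieces via the triangle inequality. The upper bound follows from
$$\norm{x^{k+1}-x^k} \le \norm{x^{k+1}-a^k} + \norm{a^k-x^k} \le \epsilon(x^k) + \norm{\rho(x^k)},$$
and the lower bound from the reverse triangle inequality,
$$\norm{x^{k+1}-x^k} \ge \norm{a^k-x^k} - \norm{x^{k+1}-a^k} \ge c\norm{\rho(x^k)} - \epsilon(x^k).$$

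The only subtlety, and the step worth being careful about, is the direction in which monotonicity is applied: one must use that $q_g$ \emph{increases} in $\eta$ (to get the upper bound via $\eta_k\le 1$) and that $p_g$ \emph{decreases} (to get the lower bound via $\eta_k \ge c$, after factoring the $\eta_k$ out of $q_g(\eta_k)=\eta_k p_g(\eta_k)$). Once the correct pairing is made, and provided $c \le \eta_k \le 1$ as guaranteed by~\eqref{eq.38}, the rest is two applications of the triangle inequality.
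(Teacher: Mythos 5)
Your proposal is correct and follows essentially the same route as the paper: the paper also compares $x^{k+1}$ to the exact step $\pp_{\eta_k}^g(x^k-\eta_k\nabla f(x^k))$ (without naming it), bounds their distance by $\eta_k\norm{e(x^k)}\le\epsilon(x^k)$ via nonexpansivity, and uses Lemma~\ref{lem.mono} to convert between $q_g(\eta_k)$ and $q_g(1)=\norm{\rho(x^k)}$ before applying the triangle inequality in both directions. The only cosmetic difference is that the paper writes the monotonicity step with $\min\set{1,\eta_k}$ and $\max\set{1,\eta_k}$ so as to cover both $\eta_k\le 1$ and $\eta_k\ge 1$ before invoking~\eqref{eq.38}, whereas you assume $c\le\eta_k\le 1$ from the outset, which~\eqref{eq.38} justifies for sufficiently large $k$ exactly as in the paper.
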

\begin{proof}
  First observe that from Lemma~\ref{lem.mono} that for $\eta_k > 0$ it holds that
  \begin{equation}
    \label{eq.37}
    \text{if}\ \ 1 \le \eta_k\ \ \text{then}\ \ q(1) \le q_g(\eta_k),\quad\text{and if}\ \  \eta_k \le 1\ \ \text{then}\ \ p_g(1) \le p_g(\eta_k) = \tfrac{1}{\eta_k}q_g(\eta_k).
  \end{equation}
  Using~\eqref{eq.37}, the triangle inequality, and Lemma~\ref{lem.nex}, we have
  \begin{alignat*}{2}
    \min\set{1,\eta_k}q_g(1) &= \min\set{1,\eta_k}\norm{\rho(x^k)}\quad\le\quad \norm{\pp_{\eta_k}^g(x^k-\eta_k\nabla f(x^k))-x^k}\\
    &\le\quad\norm{x^{k+1}-x^k} + \norm{x^{k+1}-\pp_{\eta_k}^g(x^k-\eta_k\nabla f(x^k))}\\
    &\le\quad\norm{x^{k+1}-x^k} + \norm{\eta_ke(x^k)}\quad\le\quad\norm{x^{k+1}-x^k} + \epsilon(x^k).
  \end{alignat*}
  From~\eqref{eq.38} it follows that for sufficiently large $k$ we have $\norm{x^{k+1}-x^k} \ge c\norm{\rho(x^k)} - \epsilon(x^k)$. For the upper bound note that
  \begin{align*}
    \norm{x^{k+1}-x^k} &\le \norm{x^k-\pp_{\eta_k}^g(x^k-\eta_k\nabla f(x^k))} + \norm{\pp_{\eta_k}^g(x^k-\eta_k\nabla f(x^k))-x^{k+1}}\\
    &\le \max\set{1,\eta_k}\norm{\rho(x^k)} + \norm{\eta_ke(x^k)} \quad\le\quad\norm{\rho(x^k)} + \epsilon(x^k).\hspace*{1cm}\qedhere
  \end{align*}
\end{proof}
Lemma~\ref{lem.descent} and Lemma~\ref{lem.xubounds} help prove the following crucial corollary. 
\begin{corr}
  \label{corr.main}
  Let $x^k$, $x^{k+1}$, $\eta_k$, and $c$ be as above and $k$ sufficiently large so that $c$ and $\eta_k$ satisfy~\eqref{eq.38}. Then, $\Phi(x^k)-\Phi(x^{k+1}) \ge h(x^k)$ holds with $h(x^k)$ given by 
  \begin{equation}
    \label{eq.15}
    h(x^k) := \tfrac{L^2c^3}{2(2-2Lc)}\norm{\rho(x^k)}^2 
    - \bigl(\tfrac{L^2c^2}{2-cL} + \tfrac1c\bigr)\norm{\rho(x^k)}\epsilon(x^k)
    - \bigl(\tfrac{1}{c}-\tfrac{L^2c}{2(2-cL)} \bigr)\epsilon(x^k)^2.
  \end{equation}
\end{corr}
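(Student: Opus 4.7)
The plan is to combine Lemma~\ref{lem.descent} with the two-sided bounds of Lemma~\ref{lem.xubounds} after first invoking the stepsize conditions~\eqref{eq.38} to strip the $\eta_k$-dependence. Lemma~\ref{lem.descent} gives
\[
\Phi(x^k) - \Phi(x^{k+1}) \ge \tfrac{2-L\eta_k}{2\eta_k}\norm{x^{k+1}-x^k}^2 - \tfrac{1}{\eta_k}\epsilon(x^k)\norm{x^{k+1}-x^k},
\]
so the task reduces to lower-bounding this right-hand side by a quadratic expression in $\norm{\rho(x^k)}$ and $\epsilon(x^k)$.

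First I would reduce the $\eta_k$-dependent coefficients to constants. Writing $\tfrac{2-L\eta_k}{2\eta_k}=\tfrac{1}{\eta_k}-\tfrac{L}{2}$ shows this is strictly decreasing in $\eta_k$, so the condition $\limsup_k \eta_k \le 2/L - c$ from~\eqref{eq.38} yields $\tfrac{2-L\eta_k}{2\eta_k} \ge \tfrac{L}{2-Lc}-\tfrac{L}{2} = \tfrac{L^2c}{2(2-Lc)}$ for $k$ large, while $\liminf_k \eta_k \ge c$ gives $\tfrac{1}{\eta_k}\le \tfrac{1}{c}$. These are the worst-case constants that will appear in $h(x^k)$.

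Next I would apply Lemma~\ref{lem.xubounds} asymmetrically to the two terms. The positive quadratic term benefits from the lower bound $\norm{x^{k+1}-x^k} \ge c\norm{\rho(x^k)} - \epsilon(x^k)$; squaring this (in the regime where $c\norm{\rho(x^k)} \ge \epsilon(x^k)$) yields $c^2\norm{\rho(x^k)}^2 - 2c\norm{\rho(x^k)}\epsilon(x^k) + \epsilon(x^k)^2$. The subtracted linear term is controlled by the upper bound $\norm{x^{k+1}-x^k}\le\norm{\rho(x^k)}+\epsilon(x^k)$, giving $-\tfrac{1}{c}\epsilon(x^k)(\norm{\rho(x^k)}+\epsilon(x^k))$. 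Adding the two contributions and collecting the coefficients of $\norm{\rho(x^k)}^2$, $\norm{\rho(x^k)}\epsilon(x^k)$, and $\epsilon(x^k)^2$ then matches $h(x^k)$ after pure simplification.

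The work is essentially algebraic bookkeeping; the one genuine subtlety is justifying the squared lower bound $\norm{x^{k+1}-x^k}^2 \ge (c\norm{\rho(x^k)}-\epsilon(x^k))^2$, which is immediate when $c\norm{\rho(x^k)} \ge \epsilon(x^k)$. In the complementary regime the Lemma~\ref{lem.xubounds} lower bound degenerates to $\norm{x^{k+1}-x^k}\ge 0$, but there $h(x^k)$ is itself negative (dominated by its $-\epsilon^2$ and $-\norm{\rho}\epsilon$ pieces), and the inequality $\Phi(x^k)-\Phi(x^{k+1})\ge h(x^k)$ reduces to verifying that the descent inequality dominates these negative terms via the upper bound $\norm{x^{k+1}-x^k}\le\norm{\rho(x^k)}+\epsilon(x^k)$ alone. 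With that case settled, the stated coefficients in $h(x^k)$ drop out of the collection.
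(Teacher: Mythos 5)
Your main line is exactly the paper's proof: use \eqref{eq.38} to replace the stepsize-dependent coefficients in Lemma~\ref{lem.descent} by the constants $\tfrac{L^2c}{2(2-Lc)}$ and $\tfrac1c$, then substitute the lower bound of Lemma~\ref{lem.xubounds} into the quadratic term and the upper bound into the linear term, and collect coefficients. (This reproduces the proof's leading coefficient $\tfrac{L^2c^3}{2(2-Lc)}$; the $2-2Lc$ printed in \eqref{eq.15} is a typo of the paper.) The paper additionally checks $a_1,a_2,a_3>0$, which the statement itself does not demand, so omitting that is harmless.

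Where you depart from the paper is the regime $c\norm{\rho(x^k)}<\epsilon(x^k)$, which you correctly flag: there the squaring of the lower bound is not justified, and the paper squares silently. But your patch does not close this case. Write $r=\norm{\rho(x^k)}$, $\epsilon=\epsilon(x^k)$, $d=\norm{x^{k+1}-x^k}$, and $A=\tfrac{L^2c}{2(2-Lc)}$, so that $h(x^k)=A(cr-\epsilon)^2-\tfrac1c\epsilon(r+\epsilon)$. Discarding the nonnegative quadratic term and using only $d\le r+\epsilon$ gives $\Phi(x^k)-\Phi(x^{k+1})\ge-\tfrac1c\epsilon(r+\epsilon)$, which falls short of $h(x^k)$ by exactly $A(cr-\epsilon)^2>0$; the fact that $h(x^k)<0$ does not rescue this, because the bound you obtain is more negative still. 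A correct treatment must keep the quadratic term and minimize $\phi(d)=Ad^2-\tfrac1c\epsilon d$ over the admissible interval $[0,\,r+\epsilon]$: if the vertex $\epsilon/(2Ac)$ lies beyond $r+\epsilon$, the minimum is $\phi(r+\epsilon)\ge h(x^k)$ since $r+\epsilon\ge\epsilon-cr\ge0$; if it lies inside, the needed inequality still holds, but only because \eqref{eq.38} additionally forces $c\le1$ — so that hypothesis gets genuinely used, not mere bookkeeping. (Alternatively, one may simply establish the corollary in the regime $c\norm{\rho(x^k)}\ge\epsilon(x^k)$, the only regime where $h(x^k)$ can be nonnegative and effectively the only one needed for Theorem~\ref{thm.cvg}.) As written, your reduction of the complementary case to ``the upper bound alone'' asserts something false, even though the corollary itself remains true.
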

\begin{proof}
  To simplify notation, we drop the superscripts. Thus, let $x\equiv x^k$, $u\equiv x^{k+1}$, $\eta\equiv \eta_k$. Then, we must show that
  \begin{align*}
    \Phi(x) - \Phi(u) &\quad\ge\quad  h(x)
  \end{align*}
  where $h(x)$ is given by
  \begin{equation}
    \label{eq.4}
    h(x) := a_1\norm{\rho(x)}^2 - a_2\norm{\rho(x)}\epsilon(x) - a_3\epsilon(x)^2,
  \end{equation}
  where the constants $a_1$, $a_2$, and $a_3$ are defined as
  \begin{equation}
    \label{eq.5}
    a_1 := \tfrac{L^2c^3}{2(2-2Lc)},\quad a_2 := \tfrac{L^2c^2}{2-cL} + \tfrac1c,\quad a_3 := \tfrac{1}{c}-\tfrac{L^2c}{2(2-cL)}.
  \end{equation}
  Moreover, we also note that the scalars $a_1, a_2, a_3 > 0$.
  Recall that for sufficiently large $k$, condition \eqref{eq.38} implies that
  \begin{equation}
    \label{eq.17}
    c < \eta < \tfrac{2}{L} - c\qquad\implies\qquad \tfrac{1}{\eta} > \tfrac{L}{2-Lc},\quad \tfrac{1}{\eta} < \tfrac{1}{c},\quad\text{and}\quad 2-L\eta > Lc,
  \end{equation}
  which immediately implies that
  \begin{equation*}
    \tfrac{2-L\eta}{2\eta} > \tfrac{(2-L\eta)L}{2(2-Lc)} > \tfrac{L^2c}{2(2-Lc)}\qquad\text{and}\qquad -\tfrac1\eta > -\tfrac1c.
  \end{equation*}
  Thus, we may replace~\eqref{eq.31} by the bound
  \begin{equation*}
    \Phi(x)-\Phi(u) \ge \tfrac{L^2c}{2(2-Lc)}\norm{x-u}^2-\tfrac{1}{c}\epsilon(x)\norm{x-u}.
  \end{equation*}
  Now plug in the two-sided bounds~\eqref{eq.xubounds} on $\norm{x-u}$ to obtain
  \begin{align*}
    \Phi(x)-\Phi(u) &\ge \tfrac{L^2c}{2(2-Lc)}\bigl(c\norm{\rho(x)}-\epsilon(x)\bigr)^2 
    - \tfrac{1}{c} \epsilon(x)\left(\norm{\rho(x)} + \epsilon(x)\right)\\
    &=\tfrac{L^2c^3}{2(2-Lc)}\norm{\rho(x)}^2
    - \bigl(\tfrac{L^2c^2}{2-Lc}+\tfrac{1}{c})\norm{\rho(x)}\epsilon(x)
    - \bigl(\tfrac{1}{c}-\tfrac{L^2c}{2(2-Lc)}\bigr)\epsilon(x)^2 =: h(x).
  \end{align*}
  All that remains to show is that the respective coefficients of $h(x)$ are positive. Since $2-Lc > 0$ and $c > 0$, the positivity of $a_1$ and $a_2$ is immediate. Since $0 < c < 1/L$ (see assumption~\eqref{eq.38}). Reducing inequality $a_3 = \tfrac1c- \tfrac{L^2c}{2(2-Lc)}> 0$, shows that it holds as long as $0 < c < \tfrac{\sqrt{5}-1}{L}$, which is obviously true since $c < 1/L$. Thus, the three scalars $a_1, a_2, a_3$ defined by~(\ref{eq.5}) are all positive.
\end{proof}

We now have all the ingredients to state the main convergence theorem.
\begin{theorem}[Convergence]
  \label{thm.cvg}
  Let $f \in C_L^1(\Xc)$ such that $\inf_{\Xc} f > -\infty$ and let $g$ be lsc, convex on $\Xc$. Let $\set{x^k} \subset \Xc$ be a sequence generated by~(\ref{eq.main}), and let condition~(\ref{eq.60}) on each $\norm{e(x^k)}$ hold. There exists a limit point $x^*$ of the sequence $\set{x^k}$, and a constant $K > 0$, such that $\norm{\rho(x^*)} \le K\epsilon(\sx)$. If $\set{\Phi(x^k)}$ converges, then for every limit point $x^*$ of $\set{x^k}$ it holds that $\norm{\rho(x^*)} \le K\epsilon(\sx)$. 
\end{theorem}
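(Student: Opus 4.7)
The plan is to combine the per-iteration quasi-descent bound of Corollary~\ref{corr.main} with compactness of $\Xc$ and continuity of the residual map $\rho$. First, apply Corollary~\ref{corr.main} for all $k$ at least some $K_0$ (large enough that $\eta_k$ obeys~\eqref{eq.38}) and telescope:
\[
\sum_{k=K_0}^N h(x^k) \;\le\; \sum_{k=K_0}^N\bigl(\Phi(x^k)-\Phi(x^{k+1})\bigr) \;=\; \Phi(x^{K_0}) - \Phi(x^{N+1}) \;\le\; \Phi(x^{K_0}) - \inf_\Xc\Phi.
\]
Here $\inf_\Xc\Phi > -\infty$: $\inf_\Xc f > -\infty$ by hypothesis, and the lsc convex function $g$ attains a finite infimum on the compact set $\Xc$. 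Hence the partial sums are uniformly bounded above, forbidding $h(x^k) \ge \delta > 0$ from persisting for all large $k$; consequently $\liminf_k h(x^k) \le 0$.

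Next, extract a subsequence $\{k_i\}$ with $h(x^{k_i}) \to \ell \le 0$. Compactness of $\Xc$ lets us refine so that $x^{k_i} \to \sx \in \Xc$, and a further refinement yields $\epsilon(x^{k_i}) \to \epsilon^*$ (the scalars $\epsilon(x^k)$ are bounded in view of~\eqref{eq.60} and~\eqref{eq.38}). The residual map $\rho(x) = x - \pp_1^g(x - \nabla f(x))$ is continuous, since $\nabla f$ is Lipschitz and $\pp_1^g$ is nonexpansive by Lemma~\ref{lem.nex}; in particular $\rho(x^{k_i}) \to \rho(\sx)$. Passing to the limit in
\[
h(x^{k_i}) \;=\; a_1\norm{\rho(x^{k_i})}^2 - a_2\norm{\rho(x^{k_i})}\,\epsilon(x^{k_i}) - a_3\,\epsilon(x^{k_i})^2
\]
yields $a_1\norm{\rho(\sx)}^2 - a_2\norm{\rho(\sx)}\epsilon^* - a_3(\epsilon^*)^2 = \ell \le 0$, and the quadratic formula (with $a_1,a_2,a_3 > 0$ from Corollary~\ref{corr.main}) then gives
\[
\norm{\rho(\sx)} \;\le\; \tfrac{a_2 + \sqrt{a_2^2 + 4a_1a_3}}{2a_1}\,\epsilon^* \;=:\; K\,\epsilon(\sx),
\]
identifying $\epsilon(\sx) := \epsilon^*$. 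This establishes the first assertion.

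For the second assertion, if $\{\Phi(x^k)\}$ converges then $\Phi(x^k)-\Phi(x^{k+1}) \to 0$, hence $\limsup_k h(x^k) \le 0$. For any limit point $\sx$ with $x^{k_i} \to \sx$, the same continuity argument applied along $\{k_i\}$ gives $a_1\norm{\rho(\sx)}^2 - a_2\norm{\rho(\sx)}\epsilon^* - a_3(\epsilon^*)^2 \le 0$, so $\norm{\rho(\sx)} \le K\epsilon(\sx)$. The main subtlety I anticipate is the interpretation of $\epsilon(\sx)$ in the limit: since $\epsilon(x^k)$ is introduced only as a bound on $\eta_k\norm{e(x^k)}$ rather than as an a priori continuous function of $x$, one should read $\epsilon(\sx)$ as a subsequential limit of $\epsilon(x^{k_i})$, consistent with the paper's convention $\bar\epsilon = \limsup_k \epsilon(x^k)$; the continuity of $\rho$ carries the rest of the argument without difficulty.
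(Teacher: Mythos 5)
Your proof is correct, but it takes a more self-contained route than the paper does. The paper's own proof of Theorem~\ref{thm.cvg} is essentially a citation: having established the quasi-descent inequality $\Phi(x^k)-\Phi(x^{k+1})\ge h(x^k)$ with $h$ a quadratic in $\norm{\rho(x^k)}$ and $\epsilon(x^k)$ having positive coefficients (Corollary~\ref{corr.main}), it declares the situation reduced to the perturbed-descent setting of Solodov's Theorem~2.1 and stops. You instead carry out the argument that citation encapsulates: telescoping the inequality against $\inf_{\Xc}\Phi>-\infty$ to force $\liminf_k h(x^k)\le 0$, extracting a convergent subsequence by compactness of $\Xc$, passing to the limit using continuity of $\rho$ (nonexpansiveness of $\pp_1^g$ plus Lipschitz continuity of $\nabla f$), and solving the resulting quadratic inequality to obtain the explicit constant $K=\bigl(a_2+\sqrt{a_2^2+4a_1a_3}\bigr)/(2a_1)$; the second assertion follows the same way from $\Phi(x^k)-\Phi(x^{k+1})\to 0$. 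What your version buys is transparency: an explicit $K$ depending only on $L$ and $c$, no reliance on an external theorem stated for the differentiable case, and an honest treatment of the one genuinely ambiguous point in the paper's statement, namely that $\epsilon(\sx)$ must be read as a subsequential limit of the error levels $\epsilon(x^{k_i})$ (the paper's $\bar\epsilon=\limsup_k\epsilon(x^k)$ convention), since $\epsilon$ is never defined as a continuous function of $x$. The paper's approach buys brevity and places the result in the lineage of Solodov's analysis, but your reconstruction is the argument that citation is standing in for, and all the steps you supply (boundedness of $\inf_\Xc g$ via lower semicontinuity on a compact set, boundedness of $h(x^k)$ so the liminf is finite, uniformity of $K$ across limit points) check out.
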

\begin{proof}
  Lemma~\ref{lem.descent}, \ref{lem.xubounds}, and Corollary~\ref{corr.main} have done all the hard work. Indeed, they allow us to reduce our convergence proof to the case where the analysis of the differentiable case becomes applicable, and an appeal to the analysis of~\citep[Thm.~2.1]{solodov97} grants us our claim. 
\end{proof}

Theorem~\ref{thm.cvg} says that we can obtain an approximate stationary point for which the norm of the residual is bounded by a linear function of the error level. The statement of the theorem is written in a conditional form, because nonvanishing  errors $e(x)$ prevent us from making a stronger statement. In particular, once the iterates enter a region where the residual norm falls below the error threshold, the behavior of $\set{x^k}$ may be arbitrary. This, however, is a small price to pay for having the added flexibility of nonvanishing errors. Under the stronger assumption of vanishing errors (and diminishing stepsizes), we can also obtain guarantees to exact stationary points. 

\section{Scaling up \algo: incremental variant}
\label{sec.incr}
We now apply \algo to the large-scale setting, where we have composite objectives of the form
\begin{equation}
  \label{eq.53}
  \Phi(x) := \nlsum_{t=1}^T f_t(x) + g(x),
\end{equation}
where each $f_t : \reals^n \to \reals$ is a $C_{L_t}^1(\Xc)$ function. For simplicity, we use $L = \max_t L_t$ in the sequel. It is well-known that for such decomposable objectives it can be advantageous to replace the full gradient $\sum_t \nabla f_t(x)$ by an \emph{incremental gradient} $\nabla f_{\sigma(t)}(x)$, where $\sigma(t)$ is some suitable index. 

Nonconvex incremental methods for the differentiable case been extensively analyzed in the setting of backpropagation algorithms~\citep{bertsekas10,solodov97}, which corresponds to $g(x) \equiv 0$. However, when $g(x) \neq 0$, the only incremental methods that we are aware of are stochastic generalized gradient methods of~\citep{ermoliev98} or the generalized gradient methods of~\citep{solodov98}. As previously mentioned, both of these fail to exploit the composite structure of the objective function, a disadvantage even in the convex case~\citep{duchi09a}. 

In stark contrast, we \emph{do}  exploit the composite structure of~\eqref{eq.53}. Formally, we propose the following incremental nonconvex proximal-splitting iteration:
\begin{equation}
  \label{eq.32}
  \begin{split}
    x^{k+1} &= \Mc\bigl(x^k - \eta_k\nlsum_{t=1}^T \nabla f_t(x^{k,t})\bigr),\quad k=0,1,\ldots,\\
    x^{k,1} = x^k,&\quad x^{k,t+1} = \Oc(x^{k,t} - \eta_k\nabla f_t(x^{k,t})),\quad t=1,\ldots,T-1,
  \end{split}
\end{equation}
where $\Oc$ and $\Mc$ are appropriate operators, different choices of which lead to different algorithms. For example, when $\Xc=\reals^n$, $g(x) \equiv 0$, $\Mc=\Oc=\id$, and $\eta_k \to 0$, then~\eqref{eq.32} reduces to the classic incremental gradient method (IGM)~\citep{bertsekas99}, and to the IGM of~\citep{solodov98b}, if $\lim\eta_k = \bar{\eta} > 0$. If $\Xc$ a closed convex set, $g(x) \equiv 0$, $\Mc$ is orthogonal projection onto $\Xc$, $\Oc=\id$, and $\eta_k \to 0$, then iteration \eqref{eq.32} reduces to (projected) IGM~\citep{bertsekas99,bertsekas10}. 

We may consider four variants of~\eqref{eq.32} in Table~\ref{tab.var}; to our knowledge, all of these are new. Which of the four variants one prefers depends on the complexity of the constraint set $\Xc$ and cost to apply $\pp_\eta^g$. The analysis of all four variants is similar, so we present details only for the most general case.

\begin{table}[h]\small
  \centering
  \begin{tabular}{c|c|c|c||l|l}
    $\Xc$ & $g$ & $\Mc$ & $\Oc$ & Penalty and constraints & Proximity operator calls\\
    \hline
    $\reals^n$ & $\not\equiv 0$ & $\pp_\eta^g$ & $\id$ & penalized, unconstrained & once every \emph{major} $(k)$ iteration\\
    $\reals^n$ & $\not\equiv 0$ & $\pp_\eta^g$ & $\pp_\eta^g$ & penalized, unconstrained & once every \emph{minor} $(k,t)$ iteration\\
    Convex & $h(x) + \ind{x}{\Xc}$ & $\pp_\eta^g$ & $\id$ & penalized, constrained & once every major $(k)$ iteration\\
    Convex & $h(x) + \ind{x}{\Xc}$ & $\pp_\eta^g$ & $\pp_\eta^g$ & penalized, constrained & once every minor $(k,t)$ iteration\\
    \hline
  \end{tabular}
  \caption{\small Different variants of incremental \algo~\eqref{eq.32}.}
  \label{tab.var}
\end{table}

\subsection{Convergence analysis}
\label{sec.incrConvg}
Specifically, we analyze convergence for the case $\Mc=\Oc=\pp_\eta^g$ by generalizing the differentiable case treated by~\citep{solodov98b}. We begin by rewriting~(\ref{eq.32}) in a form that matches the main iteration~(\ref{eq.main}):
\begin{equation}
  \label{eq.16}
  \begin{split}
  x^{k+1} &= \pp_\eta^g\bigl(x^k - \eta_k\nlsum_{t=1}^T \nabla f_t(x^{k,t})\bigr)\\
  &=\pp_\eta^g\bigl(x^k - \eta_k\nlsum_{t=1}^T\nabla f_t(x^k) + \eta_k\bigl[\nlsum_{t=1}^T f_t(x^k) -f_t(x^{k,t})\bigr]\bigr)\\
  &=\pp_\eta^g\bigl(x^k -\eta_k \nlsum_t\nabla f_t(x^k) + \eta_ke(x^k)\bigr).
\end{split}
\end{equation}
 %
To show that iteration~\eqref{eq.16} is well-behaved and actually fits the main \algo iteration~\eqref{eq.main}, we must ensure that the norm of the error term is bounded. We show this via a sequence of lemmas.
\begin{lemma}[Bounded-increment]
  \label{lem.zdiff}
  Let $x^{k,t+1}$ be computed by~\eqref{eq.32}, and let $s^t \in \partial g(x^{k,t})$. Then,
  \begin{equation}
    \norm{x^{k,t+1}-x^{k,t}} \le 2\eta_k\norm{\nabla f_t(x^{k,t}) + s^t}.
  \end{equation}
\end{lemma}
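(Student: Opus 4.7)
The plan is to combine two ingredients already available in the paper: the optimality condition defining the proximity operator, and its nonexpansivity (Lemma~\ref{lem.nex}). The basic idea is to write \emph{both} $x^{k,t+1}$ and $x^{k,t}$ as images of $\pp_{\eta_k}^g$ and then apply nonexpansivity to the inputs. The update rule already gives $x^{k,t+1} = \pp_{\eta_k}^g(x^{k,t} - \eta_k \nabla f_t(x^{k,t}))$, so the work is to represent $x^{k,t}$ itself as the prox of something involving $s^t$.

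The key observation I would establish first is the following fixed-point identity: for every $s^t \in \partial g(x^{k,t})$,
\begin{equation*}
  x^{k,t} = \pp_{\eta_k}^g\bigl(x^{k,t} + \eta_k s^t\bigr).
\end{equation*}
This follows immediately by writing out the optimality condition of the prox subproblem in Definition~\ref{def.prox}, which reads $0 \in \partial g(x) + \eta_k^{-1}(x-y)$; taking $y = x^{k,t}+\eta_k s^t$ makes $x = x^{k,t}$ a solution, and uniqueness of the minimizer gives the claim.

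With this identity in hand, I would apply Lemma~\ref{lem.nex} to the two arguments $x^{k,t} - \eta_k \nabla f_t(x^{k,t})$ and $x^{k,t} + \eta_k s^t$ of $\pp_{\eta_k}^g$, yielding
\begin{equation*}
  \norm{x^{k,t+1} - x^{k,t}} \;\le\; \bignorm{(x^{k,t}-\eta_k\nabla f_t(x^{k,t})) - (x^{k,t}+\eta_k s^t)}{} \;=\; \eta_k\norm{\nabla f_t(x^{k,t}) + s^t},
\end{equation*}
which is in fact stronger (by a factor of two) than the claim, so the stated bound follows a fortiori. There is no real obstacle here; the only subtle step is spotting the fixed-point identity $x = \pp_\eta^g(x+\eta s)$ for $s \in \partial g(x)$, after which nonexpansivity does all the work. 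The slack factor of two in the statement presumably just leaves room for alternative (looser) splittings, e.g.\ bounding $\norm{x^{k,t+1}-x^{k,t}}$ by $\norm{\pp_{\eta_k}^g(x^{k,t}-\eta_k\nabla f_t(x^{k,t})) - \pp_{\eta_k}^g(x^{k,t})} + \norm{\pp_{\eta_k}^g(x^{k,t})-x^{k,t}}$ and applying nonexpansivity to each term separately.
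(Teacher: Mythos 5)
Your proof is correct, and it takes a genuinely different route from the paper's. The paper argues via function values: since $x^{k,t+1}$ minimizes the prox subproblem in~\eqref{eq.7}, its objective value is no larger than the value at the candidate point $x^{k,t}$; expanding the squares, invoking the subgradient inequality $g(x^{k,t+1}) \ge g(x^{k,t}) + \ip{s^t}{x^{k,t+1}-x^{k,t}}$, and applying Cauchy--Schwarz yields $\half\norm{x^{k,t+1}-x^{k,t}}^2 \le \eta_k\norm{\nabla f_t(x^{k,t})+s^t}\,\norm{x^{k,t+1}-x^{k,t}}$, whence the factor $2$. You instead use the resolvent fixed-point identity $x^{k,t} = \pp_{\eta_k}^g(x^{k,t}+\eta_k s^t)$ (valid by the optimality condition and uniqueness of the prox minimizer) together with nonexpansivity (Lemma~\ref{lem.nex}), which gives the sharper bound $\norm{x^{k,t+1}-x^{k,t}} \le \eta_k\norm{\nabla f_t(x^{k,t})+s^t}$; the stated inequality follows a fortiori. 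Both arguments, like the paper's, implicitly assume the variant $\Oc=\pp_{\eta_k}^g$ of~\eqref{eq.32}, which is the case analyzed in Section~\ref{sec.incrConvg}, so there is no gap. Your tighter constant would even propagate downstream (the geometric factors $(1+2\eta L)$ in Lemmas~\ref{lem.perstep} and~\ref{lem.errbnd} could be improved to $(1+\eta L)$), though only boundedness is needed there. One small caveat on your closing speculation: the paper's factor of $2$ comes from its function-value comparison, not from the alternative splitting you suggest; that splitting would bound $\norm{x^{k,t+1}-x^{k,t}}$ by $\eta_k(\norm{\nabla f_t(x^{k,t})}+\norm{s^t})$, which does not in general imply a bound in terms of $\norm{\nabla f_t(x^{k,t})+s^t}$ (consider $\nabla f_t(x^{k,t})\approx -s^t$), so it would not prove the lemma as stated.
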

\begin{proof}
  From the definition of a proximity operator~\eqref{eq.7}, we have the inequality
  \begin{align*}
    &\half\norm{x^{k,t+1}-x^{k,t} + \eta_k \nabla f_t(x^{k,t})}^2 + \eta_k g(x^{k,t+1}) \le \half \norm{\eta_k \nabla f_t(x^{k,t})}^2 + \eta_k g(x^{k,t}),\\
   \hskip-12pt\implies\quad &\half\norm{x^{k,t+1}-x^{k,t}}^2 \le \eta_k\ip{\nabla f_t(x^{k,t})}{x^{k,t}-x^{k,t+1}} + \eta_k(g(x^{k,t}) - g(x^{k,t+1})).
  \end{align*}
  Since $s_t \in \partial g(x^{k,t})$, we have $g(x^{k,t+1}) \ge g(x^{k,t}) + \ip{s_t}{x^{k,t+1}-x^{k,t}}$. Therefore,
  \begin{align*}
    \half\norm{x^{k,t+1}-x^{k,t}}^2 &\le \eta_k\ip{s^t}{x^{k,t}-x^{k,t+1}} + \ip{\nabla f_t(x^{k,t})}{x^{k,t}-x^{k,t+1}}\\
    &\le \eta_k\norm{s_t+\nabla f_t(x^{k,t})} \norm{x^{k,t}-x^{k,t+1}}\\
    \implies\quad& \norm{x^{k,t+1}-x^{k,t}} \le 2\eta_k \norm{\nabla f_t(x^{k,t}) + s^t}.\qedhere
  \end{align*}
\end{proof}
Lemma~\ref{lem.zdiff} proves helpful in bounding the overall error.
\begin{lemma}[Incrementality error]
  \label{lem.perstep}
  To ease notation, define $z^t = x^{k,t}$, $x = x^k$, and $\eta=\eta_k$. Define
  \begin{equation}
    \label{eq.29}
    \epsilon_t := \norm{\nabla f_t(z^t) - \nabla f_t(x)},\quad t=1,\ldots,T.
  \end{equation}
  Then, for each $t \ge 2$, the following bound on the error holds:
  \begin{equation}
    \label{eq.43}
    \epsilon_t \le 2\eta L \nlsum_{j=1}^{t-1}(1+2\eta L)^{t-1-j}\norm{\nabla f_j(x) + s^j},\quad t=2,\ldots,T.
  \end{equation}
\end{lemma}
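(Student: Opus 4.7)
The plan is to derive a recursive inequality for $\epsilon_t$ in terms of the earlier $\epsilon_j$'s and the subgradient-residual norms $\|\nabla f_j(x)+s^j\|$, and then solve this recursion by induction on $t$ to recover the geometric-series form asserted in the lemma.

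First I would apply Lipschitz continuity of $\nabla f_t$ to bound $\epsilon_t \le L\|z^t-x\|$. Because $z^1=x$, the telescoping inequality $\|z^t-x\|\le \sum_{j=1}^{t-1}\|z^{j+1}-z^j\|$ holds. Invoking Lemma~\ref{lem.zdiff} on each increment yields $\|z^{j+1}-z^j\|\le 2\eta\|\nabla f_j(z^j)+s^j\|$, and a triangle inequality together with the Lipschitz bound for $\nabla f_j$ gives
\[
  \|\nabla f_j(z^j)+s^j\|\le \|\nabla f_j(x)+s^j\| + \|\nabla f_j(z^j)-\nabla f_j(x)\| = \|\nabla f_j(x)+s^j\|+\epsilon_j.
\]
Chaining these observations produces the key recursive inequality
\[
  \epsilon_t \;\le\; 2\eta L\sum_{j=1}^{t-1}\|\nabla f_j(x)+s^j\| + 2\eta L\sum_{j=1}^{t-1}\epsilon_j,
\]
in which the $\epsilon_1$ term drops out since $z^1=x$ forces $\epsilon_1=0$.

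Next I would solve this recursion. Abbreviating $\alpha := 2\eta L$ and $a_j := \|\nabla f_j(x)+s^j\|$, the recursion reads $\epsilon_t \le \alpha\sum_{j=1}^{t-1}a_j + \alpha\sum_{j=2}^{t-1}\epsilon_j$. I would argue by induction on $t\ge 2$. The base case $t=2$ reduces to $\epsilon_2\le \alpha a_1 = \alpha(1+\alpha)^0 a_1$, matching the claim. For the step, substitute the inductive hypothesis $\epsilon_j\le \alpha\sum_{i=1}^{j-1}(1+\alpha)^{j-1-i}a_i$ into the recursion, swap the order of summation, and use the geometric-sum identity $\sum_{m=0}^{r}(1+\alpha)^m = \tfrac{(1+\alpha)^{r+1}-1}{\alpha}$. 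The $-1$ contributions from the geometric sum then cancel precisely the naked $\alpha\sum a_j$ term, leaving exactly $\alpha\sum_{j=1}^{t-1}(1+\alpha)^{t-1-j}a_j$, which is the claimed bound.

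The main obstacle is really just bookkeeping in the induction step: one has to reorder the double sum carefully and notice the cancellation between the standalone $a_j$-sum and the ``$-1$'' pieces of the geometric series. Everything else is a direct application of Lipschitz continuity, the triangle inequality, and the increment bound from Lemma~\ref{lem.zdiff}.
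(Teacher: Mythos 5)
Your proposal is correct and follows essentially the same route as the paper: Lipschitz continuity plus telescoping and Lemma~\ref{lem.zdiff} give the increment bound, the triangle inequality converts $\norm{\nabla f_j(z^j)+s^j}$ into $\norm{\nabla f_j(x)+s^j}+\epsilon_j$, and the resulting recursion is resolved by induction with the same geometric-series cancellation the paper performs inside its induction step. The only difference is organizational—you isolate the recursion before solving it, whereas the paper substitutes the induction hypothesis directly—so there is nothing substantive to change.
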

\begin{proof}
  Our proof extends the differentiable setting of~\citep{solodov98b} to our nondifferentiable setting. We proceed by induction. The base case is $t=2$, for which we have
  \begin{align*}
    \epsilon_2 = \norm{\nabla f_2(z^2)-\nabla f_2(x)} \le L\norm{z^2-x} = L\norm{z^2-z^1} \le 2\eta L\norm{\nabla f_1(x)+s^1},
  \end{align*}
  where the last inequality follows from Lemma~\ref{lem.zdiff}. Now assume inductively that~(\ref{eq.43}) holds for $t \le r < T$, and consider $t=r+1$. In this case we have 
  \begin{alignat}{3}
    \nonumber
    \epsilon_{r+1} &\quad=\quad\norm{\nabla f_{r+1}(z^{r+1}) - \nabla f_{r+1}(x)}&\quad\le\quad& L\norm{z^{r+1}-x}\\
    \nonumber
    &\quad=\quad L\bignorm{\nlsum_{j=1}^r(z^{j+1}-z^j)}{}&\quad\le\quad& L\nlsum_{j=1}^r\norm{z^{j+1}-z^j}\\
    \label{eq.20}
    &\quad\le\quad 2\eta L\nlsum_{j=1}^r\norm{\nabla f_j(z^j) + s^j},
  \end{alignat}
  where the last inequality again follows from Lemma~\ref{lem.zdiff}.

  To complete the induction, first observe that $\norm{\nabla f_t(z^t)+s^t} \le \norm{\nabla f_t(x)+s^t} + \epsilon_t$. Thus, invoking the induction hypothesis, we obtain
  \begin{equation}
    \label{eq.19}
    \norm{\nabla f_t(z^t)} \le \norm{\nabla f_t(x)} + 2\eta L \nlsum_{j=1}^{t-1}(1+2\eta L)^{t-1-j}\norm{\nabla f_j(x) + s^j},\quad t=2,\ldots,r.
  \end{equation}
  Combining inequality~\eqref{eq.19} with~\eqref{eq.20} we further obtain
  \begin{equation*}
    \epsilon_{r+1} \le 2\eta L\nlsum_{j=1}^r\left(\norm{\nabla f_j(x)+s^j} + 2\eta L\nlsum_{l=1}^{j-1}(1+L\eta)^{j-1-l}\norm{\nabla f_l(x)+s^l}\right).
  \end{equation*}
  Introducing the shorthand $\beta_j \equiv \norm{\nabla f_j(x)+s^j}$, simple manipulation of the above inequality yields
  \begin{align*}
    \epsilon_{r+1}&\le\quad2\eta L \beta_r + \nlsum_{l=1}^{r-1} \left( 2\eta L + 4\eta^2L^2\nlsum_{j=l+1}^{r}(1+2\eta L)^{j-l-1}\right)\beta_l\\
    &=\quad 2\eta L \beta_r + \nlsum_{l=1}^{r-1} \left(2\eta L + 4\eta^2L^2\nlsum_{j=0}^{r-l-1}(1+2\eta L)^{j}) \right)\beta_l\\
    &=\quad 2\eta L \beta_r + \nlsum_{l=1}^{r-1} 2\eta L(1+2\eta L)^{r-l}\beta_l\quad=\quad 2\eta L\nlsum_{l=1}^{r}(1+2\eta L)^{r-l}\beta_l,
  \end{align*}
  which completes the proof.
\end{proof}

\begin{lemma}[Bounded error]
  \label{lem.errbnd}
  If for all $x^k \in \Xc$, $\norm{\nabla f_t(x^k)} \le M$ and $\norm{\partial g(x^k)} \le G$, then there exists a constant $K_1 > 0$ such that 
  $\norm{e(x^k)} \le K_1$.
\end{lemma}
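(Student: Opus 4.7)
The plan is to unpack the definition of $e(x^k)$ appearing in~\eqref{eq.16}, apply Lemma~\ref{lem.perstep} termwise, and then bound a geometric sum using the uniform upper bound on $\eta_k$ implied by~\eqref{eq.38}.

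First I would write $e(x^k) = \sum_{t=1}^T[\nabla f_t(x^k)-\nabla f_t(x^{k,t})]$, so that by the triangle inequality
\[
  \norm{e(x^k)} \;\le\; \sum_{t=1}^T \epsilon_t,
\]
with $\epsilon_1=0$ (since $x^{k,1}=x^k$). Next, observe that the hypotheses $\norm{\nabla f_t(x^k)}\le M$ and $\norm{\partial g(x^k)}\le G$ give, via the triangle inequality, the uniform bound $\beta_j := \norm{\nabla f_j(x^k)+s^j}\le M+G$ for every $j$.

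I would then substitute this into the estimate from Lemma~\ref{lem.perstep}:
\[
  \epsilon_t \;\le\; 2\eta_k L(M+G)\sum_{j=1}^{t-1}(1+2\eta_k L)^{t-1-j} \;=\; (M+G)\bigl[(1+2\eta_k L)^{t-1}-1\bigr],
\]
where the equality comes from summing the geometric series $\sum_{j=0}^{t-2}(1+2\eta_k L)^{j}=\frac{(1+2\eta_k L)^{t-1}-1}{2\eta_k L}$. Summing over $t=1,\ldots,T$ yields
\[
  \norm{e(x^k)} \;\le\; (M+G)\sum_{t=1}^{T}\bigl[(1+2\eta_k L)^{t-1}-1\bigr] \;\le\; T(M+G)(1+2\eta_k L)^{T-1}.
\]

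Finally, the stepsize bound in~\eqref{eq.38} guarantees $\eta_k\le\bar\eta:=\min\{1,2/L-c\}$ for all sufficiently large $k$, so $(1+2\eta_k L)^{T-1}\le(1+2\bar\eta L)^{T-1}$ is an $x$-independent constant. Setting $K_1 := T(M+G)(1+2\bar\eta L)^{T-1}$ completes the argument. The only mildly delicate step is the geometric-sum manipulation that converts the double sum in Lemma~\ref{lem.perstep} into a closed form; the rest is routine use of the triangle inequality combined with the given uniform bounds on $\nabla f_t$ and $\partial g$. There is no substantive obstacle beyond bookkeeping, since Lemma~\ref{lem.perstep} has already absorbed the core incrementality analysis.
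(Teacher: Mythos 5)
Your proposal is correct and follows essentially the same route as the paper: it invokes Lemma~\ref{lem.perstep}, bounds each $\beta_j=\norm{\nabla f_j(x^k)+s^j}$ by $M+G$, sums the resulting geometric series, and uses the boundedness of $\eta_k$ from~\eqref{eq.38} to get a uniform constant $K_1$ of the same order $(T)(M+G)(1+2\bar\eta L)^{T-1}$ as the paper's $C_1(T-1)(M+G)$. The only cosmetic difference is that you collapse the geometric sum termwise in $t$ before summing, whereas the paper first swaps the double summation; both are routine bookkeeping and yield equivalent bounds.
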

\begin{proof}
  To bound the error of using $x^{k,t}$ instead of $x^k$ first define the term
  \begin{equation}
    \label{eq.17}
    \epsilon_t := \norm{\nabla f_t(x^{k,t}) - \nabla f_t(x^k)},\quad t=1,\ldots,T.
  \end{equation}
  Then, Lemma~\ref{lem.perstep} shows that for \fromto{2}{t}{T}
  \begin{equation}
    \label{eq.43}
    \epsilon_t \le 2\eta_k L \nlsum_{j=1}^{t-1}(1+2\eta_k L)^{t-1-j}\norm{\nabla f_j(x^k) + s^j}.
  \end{equation}
  Since $\norm{e(x^k)}=\sum_{t=1}^T \epsilon_t$, and $\epsilon_1=0$, \eqref{eq.43} then leads to the bound
    \begin{align*}
    \nlsum_{t=2}^T\epsilon_t&\le 2\eta_k L\nlsum_{t=2}^T\nlsum_{j=1}^{t-1}(1+2\eta_k L)^{t-1-j}\beta_j =   2\eta_k L\nlsum_{t=1}^{T-1} \beta_t \Bigl( \nlsum_{j=0}^{T-t-1} (1+2\eta_k L)^j \Bigr)\\
    &\le \nlsum_{t=1}^{T-1}(1+2\eta_k L)^{T-t}\beta_t\quad\le\quad(1+2\eta_k L)^{T-1}\nlsum_{t=1}^{T-1}\norm{\nabla f_t(x) + s^t}\\
    &\le C_1(T-1)(M+G) =: K_1.\hspace*{1cm}\qedhere
  \end{align*}
\end{proof}
Thus, the error norm $\norm{e(x^k)}$ is bounded from above by a constant, whereby it satisfies the requirement~(\ref{eq.60}), making the incremental \algo method (\ref{eq.32}) a special case of the general \algo framework. This allows us to invoke the convergence result Theorem~\ref{thm.cvg} for without further ado.

\section{Illustrative application}
\label{sec.app}
The main contribution of our paper is the new \algo framework, and a specific application is not one of the prime aims of this paper. We do, however, provide an illustrative application of \algo to a challenging nonconvex problem: \emph{sparsity regularized low-rank matrix factorization}
\begin{equation}
  \label{eq.48}
  \min_{X, A \ge 0}\quad\half\frob{Y-XA}^2 + \psi_0(X) + \nlsum_{t=1}^T\psi_t(a_t),
\end{equation}
where $Y \in \reals^{m\times T}$, $X \in \reals^{m\times K}$ and $A \in \reals^{K\times T}$, with $a_1,\ldots,a_T$ as its columns. Problem~\eqref{eq.48} generalizes the well-known nonnegative matrix factorization (NMF) problem of~\citep{lee00} by permitting arbitrary $Y$ (not necessarily nonnegative), and adding regularizers on $X$ and $A$. A related class of problems was studied in~\citep{mairal10a}, but with a crucial difference: the formulation in~\citep{mairal10a} \emph{does not} allow nonsmooth regularizers on $X$. The class of problems studied in~\citep{mairal10a} is in fact a subset of those covered by \algo. On a more theoretical note, \citep{mairal10a} considered stochastic-gradient like methods whose analysis requires computational errors and stepsizes to vanish, whereas our method is deterministic and allows nonvanishing stepsizes and errors.

Following~\citep{mairal10a} we also rewrite~\eqref{eq.48} in a form more amenable to \algo. We eliminate $A$ and consider
\begin{equation}
  \label{eq.49}
  \nlmin_{X}\quad \phi(X) := \nlsum_{t=1}^Tf_t(X) + g(X),\quad\text{where}\quad g(X):=\psi_0(X) + \ind{\ge 0}{X},
\end{equation}
and where each $f_t(X)$ for \fromto{1}{t}{T} is defined as
\begin{equation}
  \label{eq.50}
  f_t(X) := \nlmin_{a}\quad\half\norm{y_t - Xa}^2 + g_t(a),
\end{equation}
where $g_t(a) := \psi_t(a) + \ind{\ge 0}{a}$. For simplicity, assume that~\eqref{eq.50} attains its unique\footnote{Otherwise, at the expense of more notation, we can add a small strictly convex perturbation to ensure uniqueness; this perturbation can be then absorbed into the overall computational error.} minimum, say $a^*$, then $f_t(X)$ is differentiable and we have $\nabla_X f_t(X) = (Xa^*-y_t)(a^*)^T$. Thus, we can instantiate~(\ref{eq.32}), and all we need is a subroutine for solving~\eqref{eq.50}.\footnote{In practice, it is better to use \emph{mini-batches}, and we used the same sized mini-batches for all the algorithms.}

We present empirical results on the following two variants of~\eqref{eq.49}: (i) pure unpenalized NMF ($\psi_t\equiv 0$ for \fromto{0}{t}{T}) as a baseline; and (ii) sparsity penalized NMF where $\psi_0(X)\equiv\lambda\norml{X}$ and $\psi_t(a_t) \equiv \gamma\norml{a_t}$.  
Note that without the nonnegativity constraints, \eqref{eq.49} is similar to  sparse-PCA.

We use the following datasets and parameters: 
\begin{inparaenum}[$\llbracket$i$\rrbracket$]
\setlength{\itemsep}{-1pt}
\item \texttt{RAND}: $4000 \times 4000$ dense random (uniform $[0,1]$); rank-32 factorization; $(\lambda,\gamma)=(10^{-5},10)$;
\item \texttt{CBCL}: CBCL database~\citep{sung96}; $361 \times 2429$; rank-49 factorization; 
\item \texttt{YALE}: Yale B Database~\citep{kclee05}; $32256 \times 2414$ matrix; rank-32 factorization; 
\item \texttt{WEB:} Web graph from google; sparse $714545 \times 739454$ (empty rows and columns removed) matrix; ID: 2301 in the sparse matrix collection~\citep{davis11}); rank-4 factorization; $(\lambda=\gamma=10^{-6})$.
\end{inparaenum}

\begin{figure}[tbp]
  \centering
  \begin{tabular}{ccc}
    \hskip-12pt\includegraphics[width=0.33\linewidth]{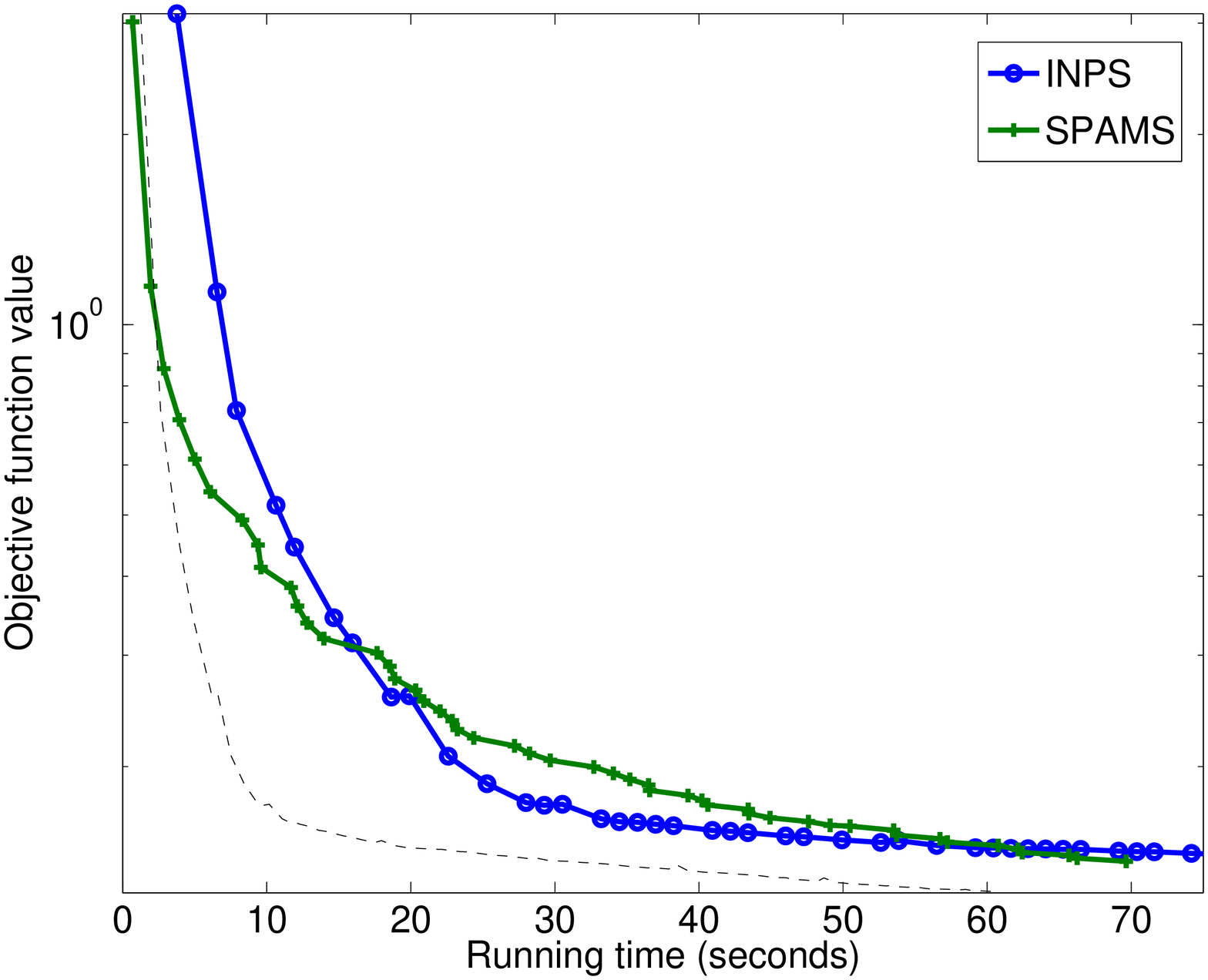} &
    \hskip-12pt\includegraphics[width=0.33\linewidth]{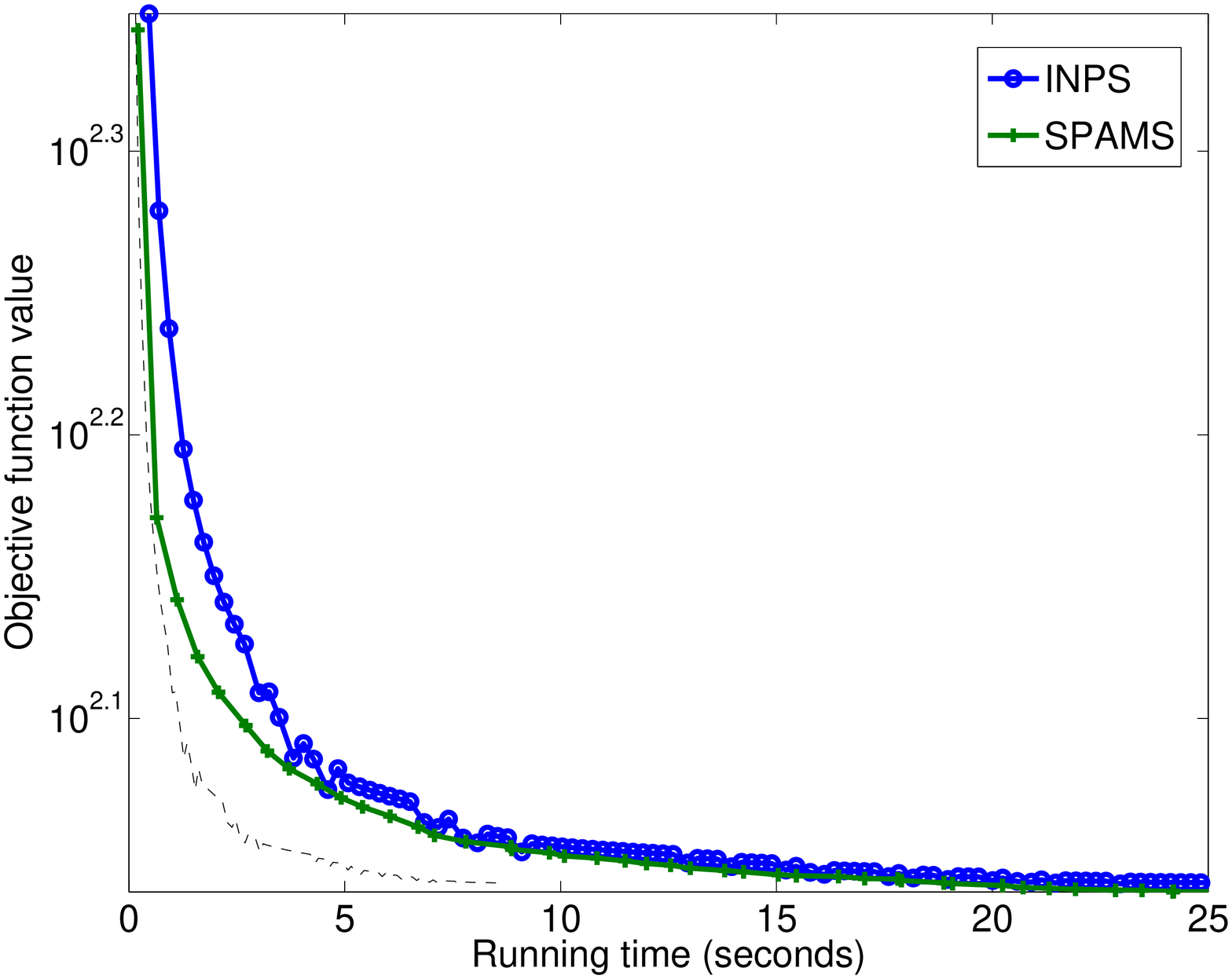} &
    \hskip-12pt\includegraphics[width=0.33\linewidth]{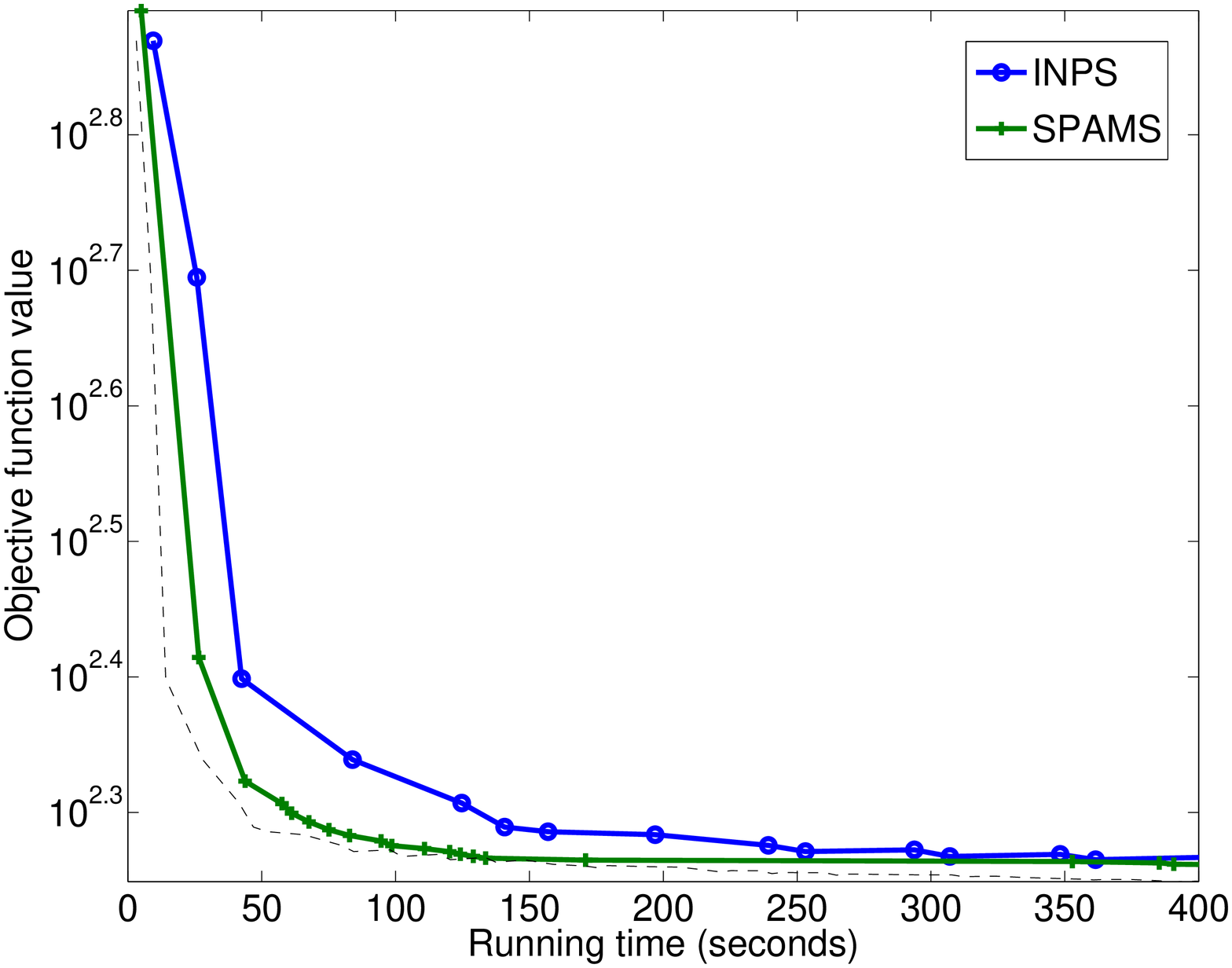}\\
\end{tabular}
  \caption{\small Running times of \algo (\text{Matlab}) versus SPAMS (C++) for NMF on \texttt{RAND}, \texttt{CBCL}, and \texttt{YALE} datasets.  Initial objective values and tiny runtimes have been suppressed for clarity of presentation.}
  \label{fig.res}
\end{figure}
On the NMF baseline (Fig.~\ref{fig.res}), we compare \algo against the well optimized state-of-the-art C++ toolbox SPAMS (version 2.3)~\citep{mairal10a}. We compare against SPAMS only on dense matrices, as its NMF code seems to be optimized for this case. Obviously, the comparison is not fair: unlike SPAMS, \algo and  its subroutines are all implemented in \textsc{Matlab}, and they run equally easily on large sparse matrices. Nevertheless, \algo proves to be quite competitive: Fig.~\ref{fig.res} shows that our \textsc{Matlab} implementation runs only slightly slower than SPAMS. We expect a well-tuned C++ implementation of \algo to run at least 4--10 times faster than the \textsc{Matlab} version---the dashed line in the plots visualizes what such a mere 3X-speedup to \algo might mean.

Figure~\ref{fig.snmf} shows numerical results comparing the stochastic generalized gradient (SGGD) algorithm of~\citep{ermoliev98} against \algo, when started at the same point. As in well-known, SGGD requires careful stepsize tuning; so we searched over a range of stepsizes, and have reported the best results. \algo too requires some stepsize tuning, but substantially lesser than SGGD. As predicted, the solutions returned by \algo have objective function values lower than SGGD, and have greater sparsity.

\begin{figure}[htbp]
  \centering
  \begin{tabular}{cc|cc}
    \hskip-15pt\includegraphics[width=0.33\linewidth]{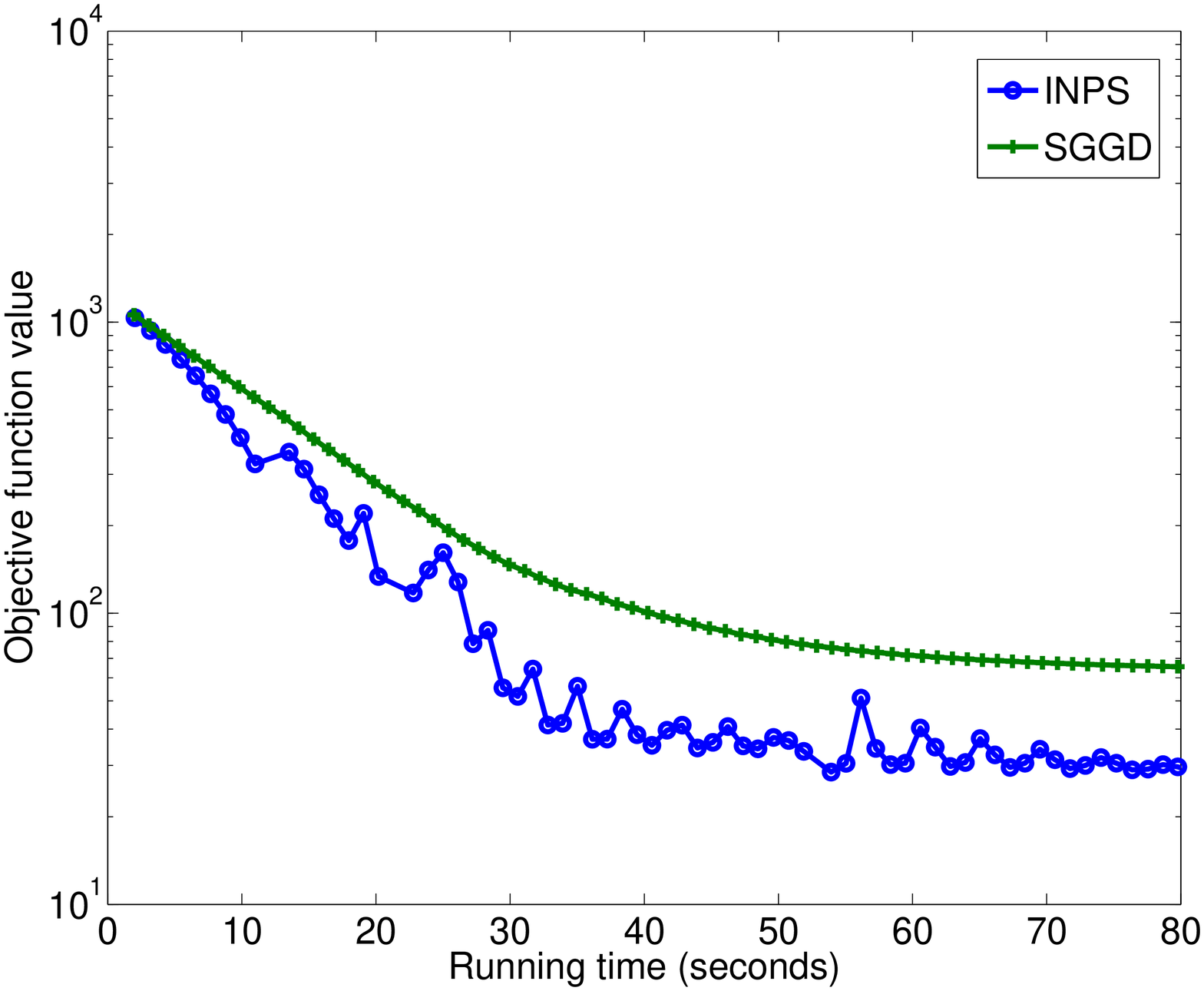}&
    \hskip-22pt\raisebox{28pt}{\includegraphics[width=0.23\linewidth]{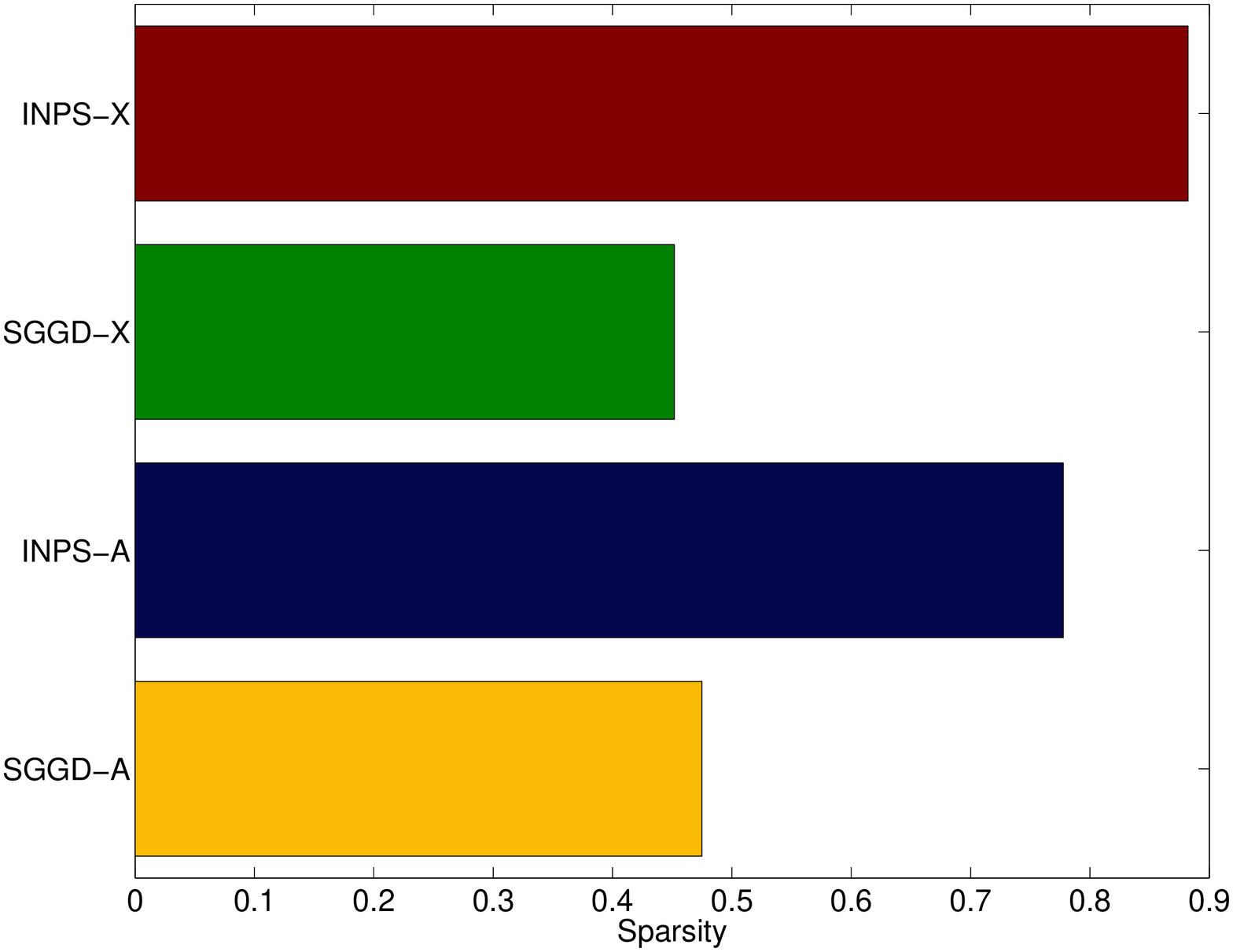}}&
\end{tabular}
  \caption{\small Sparse NMF: \algo versus SGGD. The bar plots show the sparsity (higher is better) of the factors $X$ and $A$. Left plots for \texttt{RAND} dataset; right plots for~\texttt{WEB}. As expected, SGGD yields slightly worse objective function values and less sparse solutions than \algo.}
  \label{fig.snmf}
\end{figure}

\section{Discussion}
\label{sec.disc}
We presented a new framework called \algo, which solves a broad class of nonconvex composite objective problems. 
\algo permits nonvanishing computational errors, which can be practically useful. We specialized \algo to also obtain a scalable incremental version. Our numerical experiments on large scale matrix factorization indicate that \algo is competitive with state-of-the-art methods.

We conclude by mentioning that \algo includes numerous other algorithms as special cases. For example, batch and incremental convex FBS, convex and nonconvex gradient projection, the proximal-point algorithm, among others. Theoretically, however, the most exciting open problem resulting from this paper is: \emph{extend \algo in a scalable way when even the nonsmooth part is nonconvex}. This case will require very different convergence analysis, and is left to the future.


\setlength{\bibsep}{2pt}
\bibliographystyle{abbrvnat}

\appendix
\section{Implementation notes}
If $\Xc$ is the nonnegative orthant $ \reals_+^n$, then the proximity operator $\prox_\eta^g$ often simplifies as
\begin{equation}
  \label{eq.55}
  \prox_\eta^g(y) = \prox_\eta^\psi(\proj_{\Xc}y).
\end{equation}
Additionally, if $\psi$ is an elementwise separable function, then one can easily admit a box-plus-hyperplane constraint set $\Xc$ of the form
\begin{equation}
  \label{eq.56}
  \Xc = \set{ x \in \reals^n\ |\  l_i \le x_i \le u_i, \text{for } 1 \le i \le n,\ \text{and } a^Tx=b}.
\end{equation}
For more general constraint sets, we can invoke \emph{Dykstra splitting}~\citep{combettes10}, which solves the problem
\begin{equation}
  \label{eq.57}
  \min\quad\half\norm{x-y}^2 + \psi(x) + \delta(x | \Xc),
\end{equation}
by using the following algorithm
\begin{center}
\framebox{
  \begin{minipage}{0.55\linewidth}
    \small{\it Dykstra splitting for~\eqref{eq.57}}\\[-2mm]
    \hrule
    \vskip 3pt
    Initialize $x\gets y$, $p\gets 0$, $q\gets 0$\\
    While $\neg$ converged, iterate:\\[-2mm]
    \begin{equation}
      \label{eq.58}
      \hskip -4cm
      \left\lfloor
        \begin{array}{l@{\hspace{2pt}}l}
          e &\gets\prox_1^\psi(x + p)\\
          p &\gets x+p-e\\
          x &\gets\proj_{\Xc}(e + q)\\
          q &\gets e+q-x.
        \end{array}
      \right.
    \end{equation}
  \end{minipage}
}
\end{center}
It can be shown that iterating~\eqref{eq.58} converges to the solution of~\eqref{eq.57}. In practice, it usually suffices to run Dykstra splitting for a few iterations (2--10) only. 

\end{document}